\newcommand{\al}{\alpha}
\newcommand{\N}{\mathbb{N}}
\newcommand{\R}{\mathbb{R}}
\newcommand{\Per}{\mathrm{Per}}
\newcommand{\calL}{\mathcal{L}}
\newcommand{\calM}{\mathcal{M}}
\newcommand{\calC}{\mathcal{C}}
\newcommand{\ud}{\mathrm{d}}
\newcommand{\g}{g_\Omega}
\newcommand{\Rd}{\mathbb{R}^d}
\newcommand{\eps}{\varepsilon}
\newcommand{\sgn}{\operatorname{sgn}}
\newcommand{\norm}[1]{{\lVert #1 \rVert}}
\newcommand{\ind}{\mathds{1}}
\newcommand{\vertiii}[1]{{\left\vert\kern-0.25ex\left\vert\kern-0.25ex\left\vert #1 
    \right\vert\kern-0.25ex\right\vert\kern-0.25ex\right\vert}}
\newcommand{\WUSC}[3]{\textrm{\rm WUSC}\left(#1,#2,#3\right)}
\newcommand{\uC}{{\overline{C}}}
\def \Aa{\rm{(A1)}}
\def \Ab{\rm{(A2)}}
\def \Ac{\rm{(A3)}}
\newtheorem{theorem}{Theorem}
\newtheorem{proposition}{Proposition}
\newtheorem{lemma}{Lemma}
\newtheorem{corollary}{Corollary}
\theoremstyle{definition}
\newtheorem{example}{Example}
\title{Asymptotic expansion of the nonlocal heat content}
\author[T.~Grzywny]{Tomasz Grzywny}
\address{Faculty of Pure and Applied Mathematics, Wroc\l{}aw University of Science and Technology, Wyb. Wyspia\'nskiego 27, 50-370 Wroc\l{}aw, Poland.}
\email{tomasz.grzywny@pwr.edu.pl}
\author[J.~Lenczewska]{Julia Lenczewska}
\address{Faculty of Pure and Applied Mathematics, Wroc\l{}aw University of Science and Technology, Wyb. Wyspia\'nskiego 27, 50-370 Wroc\l{}aw, Poland.}
\email{julia.lenczewska@pwr.edu.pl}
\thanks{This research was partially supported by National Science Centre (Poland) grant 2019/33/B/ST1/02494.}
\subjclass[2020]{60G51, 60G52, 60J76, 35K05}
\keywords{asymptotic expansion, characteristic exponent, convolution semigroup, fractional Laplacian, heat content, H\"older space, 
	 L\'{e}vy measure, nonlocal operator, perimeter, regular variation}
\begin{document}
\selectlanguage{english}

\begin{abstract}
Let $(p_t)_{t\geq0}$ be a convolution semigroup of probability measures on $\Rd$ defined by
$$\int_{\Rd} e^{i\left<\xi,x\right>} p_t(\ud x)=e^{-t\psi(\xi)}\,, \quad \xi \in\Rd,$$ and let $\Omega$ be an open subset of $\mathbb{R}^d$ with finite Lebesgue measure. In this article we consider the quantity
$H_{\Omega}(t)= \int_{\Omega}\int_{\Omega-x}p_t( \ud y)\ud x$,
which is called the heat content. We study its asymptotic expansion under mild assumptions on $\psi$, in particular in the case of the $\al$-stable semigroup.
\end{abstract}

\maketitle
\section{Introduction}
Let $d\in\N$. We consider a semigroup of probability measures $(p_t)_{t\geq0}$ given by
 $$
\int_{\Rd} e^{i\langle\xi,x\rangle} p_t(\ud x)
=e^{-t\psi(\xi)}\,,\qquad \xi \in\Rd\,,
$$
where $\psi$ is a symbol defined by
\begin{align*}
\psi (\xi) =  \int_{\Rd}\left( 1-e^{i \langle\xi,z\rangle}\right)\nu (\ud z),\quad \xi\in\Rd,
\end{align*}
and $\nu(\ud z)$ is a Borel measure satisfying
$$
\nu(\{0\})=0\,,
\qquad\quad
\int_{\Rd} (1\land |z|) \, \nu(\ud z)<\infty \,.
$$
Let $\{P_t\}_{t\geq0}$ be the convolution semigroup of operators on $\calC_0(\Rd)$ defined by $(p_t)_{t\geq 0}$ and let $\calL$ denote its infinitesimal generator, which for $f\in C_c^2(\Rd)$ is given by the formula
\begin{align}
\calL f (x) &=  \int_{\Rd} \left(f(x+z) - f(x)\right) \nu (\ud z). \label{L}
\end{align}

Let $\Omega$ be a non-empty, open subset of $\Rd$ such that its Lebesgue measure $|\Omega|$ is finite. We consider the following quantity associated with the semigroup $(p_t)_{t\geq0}$,
\begin{align*}
H _{\Omega} (t) =   \int_{\Omega}\int_{\Omega-x}p_t( \ud y)\ud x,
\end{align*}
which we will call \textit{heat content}.

We note that the function $u(t,x) = \int_{\Omega-x}p_t(\ud y)$ is the weak solution of the initial value problem
\begin{align*}
\frac{\partial}{\partial t}u(t,x) &= \mathcal{L}\, u(t,x),\quad t>0,\, x\in \Rd, \\
u(0,x) &= \mathds{1}_{\Omega}(x). 
\end{align*}
Therefore, the quantity $H_\Omega (t)$ can be interpreted as the amount of \textit{heat} in $\Omega$ if its initial temperature is one whereas the initial temperature of $\Omega^c$ is zero.

Our main goal is to study the
asymptotic expansion
 of $H_{\Omega}(t)$ 
for small $t$. 
We observe that
$$
H_{\Omega}(t) = |\Omega| - H(t),
$$
where 
$$
H(t) = \int_{\Omega} \int_{\Omega^c-x} p_t(\ud y) \ud x,
$$
and hence it suffices to work with the function $H(t)$. One of the main results of \cite{MR3563041} states that, for small $t$,
$$
H_{\Omega}(t) = |\Omega| -t\Per_{\nu}(\Omega)+o(t),$$
where
 $\Per_{\nu}(\Omega)$ is the nonlocal perimeter related to the measure $\nu$, defined as
\begin{align}\label{X_perimeter}
\Per_{\nu}(\Omega)= \int_{\Omega}\int_{\Omega ^c-x}\nu (\ud y)\, \ud x .
\end{align}
For instance, if $\nu$ is the $\al$-stable L{\'e}vy measure with $\al \in (0,1)$, denoted by
$\nu^{(\al)}(\ud z) =  \mathcal{A}_{d,-\alpha}|z|^{-d-\al} \, \ud z$, 
where 
 $$
\mathcal{A}_{d,-\alpha} = \frac{2^{\alpha}\Gamma\left(\frac{d+\alpha}{2}\right)}{\pi^{d/2} \left|\Gamma\left(-\frac{\alpha}{2}\right)\right|},
$$
then $\Per_{\nu^{(\al)}}(\Omega) = \mathcal{A}_{d,-\al} \Per_{(\al)}(\Omega)$, with 
$\Per_{(\al)}(\Omega)$ being the well-known $\al$-perimeter \cite{MR2675483}, given for $0<\al <1$ by
\begin{align*}
\Per_{(\al)} (\Omega)= \int_{\Omega}\int_{\Omega ^c} \frac{\ud y\, \ud x}{|x-y|^{d+\al}}.
\end{align*}
In the present paper, we shall establish the next terms of the asymptotic expansion of the heat content related to convolution semigroups. Such result is new even for the fractional Laplacian $(-\Delta)^{\alpha/2}$ (in our setting we consider $\alpha\in(0,1)$). For instance, if $1/\alpha$ is a natural number, we prove the following expansion of the heat content for the fractional Laplacian:
$$H_\Omega(t)=|\Omega|+  \sum_{n=1}^{1/\alpha-1} \frac{(-1)^{n}}{n!} t^n \Per_{\nu^{(n\alpha)}} (\Omega) +
\frac{(-1)^{1/\alpha}}{(1/\alpha-1)!\pi}t^{1/\alpha}\log(1/t) \Per(\Omega) + o(t^{1/\al} \log(1/t)) ,$$
where $\Per$ is the classical perimeter of the set, see \eqref{Perimeter_def}.  
A natural question arises: will the next term be the mean curvature or its non-local counterpart?

The key observation to obtain the asymptotic expansion is that the heat content can be expressed as the action of the semigroup on the {\it covariance function} of a set. We give more general results, concerning the asymptotic expansion of $P_tf$ for functions $f$ belonging to H{\"o}lder space. Our standing assumption is the {\it weak upper scaling} of the
symbol $\psi^*$, or equivalently, certain scaling properties of the {\it concentration function} of a L\'evy measure,  see Theorem \ref{thm:thm1}. For a class of convolution semigroups, for instance for the semigroup associated to $\log(1+\Delta)$, we get the full expansion, see Theorem \ref{thm:thm2}. We apply these results to obtain the expansion of heat content, which are stated in Corollaries \ref{thm:cor1} and \ref{thm:cor2}.  
Using asymptotic expansion of the heat kernel of the fractional Laplacian, we give more explicit asymptotic expansion in the case of $\al$-stable semigroups, see Theorems \ref{thm3} and \ref{thm4}.

Heat content related to the Gaussian semigroup ($\calL = \frac{1}{2} \Delta$) of a set at time $t$ was defined by van den Berg \cite{MR3116054} by means of the heat semigroup. Van den Berg and Gilkey \cite{MR1262245} proved that the heat content, regarded as a function of variable $t$, has an asymptotic expansion as $t$ tends to $0$. The first three terms in the expansion include the volume of the set, its perimeter and its mean curvature. The short time behavior of heat semigroup in connection
with the geometry of sets with finite perimeter was also studied by Angiuli, Massari and Miranda \cite{MR3019137}. The concept of the heat content was extended to the nonlocal setting of $\al$-stable semigroups in 2016 by Acu\~{n}a Valverde \cite{MR3606559}, who described the small-time asymptotic behavior of the nonlocal heat content in this case. In the one-dimensional case, the number of terms of the expansion depends on the parameter $\al$, and in the multidimensional case, there are the first two terms of the expansion. The same author found first three terms of the asymptotic expansion for the Poisson heat content over the unit ball \cite{MR4224349} and over convex bodies \cite{MR4158754}. 
In 2017, Cygan and Grzywny \cite{MR3563041} introduced the notion of a nonlocal heat content related to general probabilistic convolution semigroups and generalized the mentioned results of Acu\~{n}a Valverde. Later, they proved similar results for the generalized heat content related to convolution semigroups \cite{MR3859849}. Maz\`{o}n, Rossi and Toledo \cite{MR3930619} found the full asymptotic expansion of the heat content for nonlocal diffusion with nonsingular kernels. Recently, in a more general setting, the heat
content related to fractional Laplacian in Carnot groups was studied by Ferrari, Miranda, Pallara, Pinamonti, and Sire \cite{MR3732178}.

\section{Preliminaries}
\subsection{Convolution semigroups}\label{sec21}
For $f: \Rd \to \R$, let
$$
P_t f (x) = \int_{\Rd} f(x+y) \, p_t(\ud y), \quad \, t \geq 0\,, \, x \in \Rd\,.
$$
The generator $\calL$ of the semigroup $\{P_t\}_{t\geq 0}$ is defined as
$$
\mathcal{L}f(x) = \lim_{t \to 0^+} \frac{P_t f(x) - f(x)}{t},
$$
for functions for which the above limit exists.

We denote by $\calC_0(\Rd)$ the space of continuous functions $f: \Rd \to \R$ vanishing at infinity. For $\beta\in(0,1]$ we define 
$$
\vertiii{f}_{\beta} := \sup_{|x-y|\leq 1} \frac{|f(x)-f(y)|}{|x-y|^{\beta}}.
$$
We will consider the H\"older space
$$
\calC_0^{\beta} = \left\{ f \in \calC_0(\Rd): \|f\|_{\beta}:= \vertiii{f}_{\beta} + \|f\|_{\infty} < \infty \right\}.
$$
\cite[Theorem 3.2]{MR3926121} implies that, for a fixed $\beta \in (0,1]$, if $\int_{|y|<1} |y|^{\beta} \nu (\ud y) < \infty$, then for $f \in \calC_0^{\beta}$,
\begin{equation}\label{eq:LH}
\calL f(x) = \int_{\Rd} (f(x+y)-f(x)) \, \nu (\ud y).
\end{equation}

The real part of the symbol $\psi$
equals
$
{\rm Re}[\psi(\xi)]=\int_{\Rd}\big( 1-\cos \left<\xi,z\right> \big) \, \nu(\ud z)
$. We will consider its radial, continuous and non-decreasing majorant
defined by
$$
\psi^*(r)=\sup_{|\xi|\leq r} {\rm Re}[\psi(\xi)],\qquad r>0\,.
$$

For $r>0$ we define the {\it concentration function}
$$
h(r)= \int_{\Rd}\left(1\wedge\frac{|x|^2}{r^2}\right)\nu(\ud x)\,.
$$
By \cite[Lemma~4]{MR3225805}, for all $r>0$,
\begin{align}\label{ineq:comp_TJ}
\frac{1}{8(1+2d)} h(1/r)\leq \psi^*(r) \leq 2 h(1/r)\,.
\end{align}
Hence $h$ is a more tractable version 
of $\psi^*$. By \cite[Lemma 2.1]{MR4140542},
\begin{equation}\label{eq:h_estimate}
\int_{|z|\geq r }  \nu(\ud z)\leq  h(r)  \quad \textnormal{for all} \,\, r>0.
\end{equation}
By \cite[Lemma 2.7]{MR4140542}, if $f\colon [0,\infty)\to [0,\infty)$ is differentiable, $f(0)=0$, $f'\geq 0$ and $f'\in L^1_{loc}([0,\infty))$, then for all $r>0$,
\begin{equation}\label{eq:lem26}
\int_{|z|<r} f(|z|) \,\nu(\ud z) =\int_0^r f'(s) \nu (|x|\geq s)\, \ud s - f(r) \nu(|x|\geq r).
\end{equation}

Let $\theta_0 \geq 0$ and $\phi : (\theta_0, \infty) \to [0,\infty]$. We say that
$\phi$ satisfies {the} {\it weak upper scaling condition} (at infinity) if there are numbers
$\al \in \R$, 
and $\uC \in [1,\infty)$ such that
\begin{equation}\label{eq:USC}
 \phi(\lambda\theta)\le
\uC \lambda^{\al} \phi(\theta) \quad \mbox{for}\quad \lambda\ge 1, \quad\theta
>\theta_0
\end{equation}
In short, $\phi\in$ $\WUSC{\al}{\theta_0}{\uC}$. This condition will be our standing assumption on the symbol $\psi^*$ throughout the paper.

The following auxiliary result is a consequence of \eqref{ineq:comp_TJ}, \eqref{eq:h_estimate} and \eqref{eq:lem26}.
\begin{lemma}\label{lem:equiv}
Let $\al \in(0,2]$, $\uC \in [1,\infty)$ and $\theta_0 \in[0,\infty)$. Consider
\begin{enumerate}
\item[\Aa] $\psi^* \in \WUSC{\al}{\theta_0}{\uC}$
\item[\Ab] There is $C>0$ such that for all $\lambda \leq 1$ and $r<1/\theta_0$,
$$
h(\lambda r)\leq C \lambda^{-\al}h(r).
$$
\end{enumerate}
Then, $\Aa$ implies $\Ab$ with $C = c_d \uC$, where $c_d=16(1+2d)$, while $\Ab$ gives $\Aa$ with $\uC=c_d C$. If additionally $\theta_0 \in [0,1)$, then $\Aa$ and $\Ab$ each imply 
\begin{enumerate}
\item[\Ac] For all $\eps>0$,
$$
\int_{|y|<1} |y|^{\al+\eps} \nu(\ud y) < \infty.
$$
\end{enumerate}
\end{lemma}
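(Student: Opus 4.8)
The plan is to read off all three implications directly from the two–sided comparison \eqref{ineq:comp_TJ} between $\psi^*$ and the concentration function $h$, together with the tail bound \eqref{eq:h_estimate} and the integration-by-parts identity \eqref{eq:lem26}. No deep input is required; the proof is essentially a matter of composing inequalities and tracking the constant $c_d=16(1+2d)$.

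For the implication (A1)$\,\Rightarrow\,$(A2), fix $\lambda\le 1$ and $r<1/\theta_0$ and set $s=1/r>\theta_0$. Applying the lower estimate in \eqref{ineq:comp_TJ} at radius $s/\lambda$, then the scaling (A1) with factor $1/\lambda\ge 1$ at the point $s>\theta_0$, and finally the upper estimate in \eqref{ineq:comp_TJ}, one gets
$$
h(\lambda r)=h\!\left(\frac{1}{s/\lambda}\right)\le 8(1+2d)\,\psi^*\!\left(\frac{s}{\lambda}\right)\le 8(1+2d)\,\uC\,\lambda^{-\al}\,\psi^*(s)\le 16(1+2d)\,\uC\,\lambda^{-\al}\,h(1/s),
$$
and $h(1/s)=h(r)$, which is (A2) with $C=c_d\uC$. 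The reverse implication is the mirror image: for $\lambda\ge 1$ and $\theta>\theta_0$ one writes $\psi^*(\lambda\theta)\le 2\,h\big(1/(\lambda\theta)\big)$, applies (A2) with factor $1/\lambda\le 1$ and $r=1/\theta<1/\theta_0$ to bound $h\big(1/(\lambda\theta)\big)\le C\lambda^{\al}h(1/\theta)$, and closes with $h(1/\theta)\le 8(1+2d)\psi^*(\theta)$, giving (A1) with $\uC=c_dC$.

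Assume now in addition $\theta_0\in[0,1)$, so that $1<1/\theta_0$; then (A2) may be used at $r=1$ and yields $h(s)\le C\,h(1)\,s^{-\al}$ for all $s\in(0,1]$, where $h(1)=\int_{\Rd}(1\wedge|z|^2)\,\nu(\ud z)<\infty$ since $\int_{\Rd}(1\wedge|z|)\,\nu(\ud z)<\infty$. Fix $\eps>0$ and apply \eqref{eq:lem26} with $f(s)=s^{\al+\eps}$; this is admissible because $f(0)=0$, $f'(s)=(\al+\eps)s^{\al+\eps-1}\ge 0$, and $f'\in L^1_{loc}([0,\infty))$ as $\al+\eps-1>-1$. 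Discarding the non-positive boundary term and using \eqref{eq:h_estimate} followed by the scaling bound for $h$,
$$
\int_{|z|<1}|z|^{\al+\eps}\,\nu(\ud z)\le(\al+\eps)\int_0^1 s^{\al+\eps-1}\,\nu(|x|\ge s)\,\ud s\le(\al+\eps)\,C\,h(1)\int_0^1 s^{\eps-1}\,\ud s=\frac{(\al+\eps)\,C\,h(1)}{\eps}<\infty,
$$
which is (A3).

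The argument has no genuinely hard step; the points that need a little attention are the correct propagation of the constant $c_d=16(1+2d)$ when composing the two halves of \eqref{ineq:comp_TJ}, the verification that $f(s)=s^{\al+\eps}$ satisfies the hypotheses of \eqref{eq:lem26} (which uses nothing beyond $\al>0$, already assumed since $\al\in(0,2]$), and the observation that the admissible range $r<1/\theta_0$ in (A2) reaches $r=1$ exactly when $\theta_0<1$, which is why (A3) is asserted only in that case.
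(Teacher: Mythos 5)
Your proof is correct and follows essentially the same route as the paper's: composing the two-sided comparison \eqref{ineq:comp_TJ} with the scaling hypothesis for the equivalence of (A1) and (A2), and applying \eqref{eq:lem26} with $f(s)=s^{\al+\eps}$ together with \eqref{eq:h_estimate} for (A3). The only difference is that you write out the converse implication explicitly where the paper says it is analogous.
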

\begin{proof}
We will show that \Aa \, implies \Ab. Using \eqref{ineq:comp_TJ}, \eqref{eq:USC} and again \eqref{ineq:comp_TJ}, we obtain
\begin{align*}
h(\lambda r) \leq 8(1+2d) \psi^*((\lambda r)^{-1}) & \leq 8(1+2d) \uC \lambda^{-\al} \psi^*(r^{-1}) \\
&\leq 16(1+2d) \uC \lambda^{-\al} h(r).
\end{align*}
The converse implication can be proved analogously. It remains to show that \Aa \, implies \Ac. By \eqref{eq:lem26} with $f(s)= s^{\al+\eps}$ and $r=1$,
\begin{align*}
\int_{|y|<1} |y|^{\al + \eps} \, \nu (\ud y) &= (\al+\eps) \int_0^1 s^{\al+\eps-1} \nu(|x|\geq s) \, \ud s - \nu(|x|\geq 1) \\
&\leq (\al+\eps) \int_0^1 s^{\al+\eps-1} h(s) \, \ud s \\
&\leq (\al+\eps) \, C h(1) \int_0^1 s^{\eps-1}  \, \ud s 
= \frac{C(\al+\eps)}{\eps} h(1) <\infty.
\end{align*}
\end{proof}

\subsection{Heat content}
Following \cite[Section 3.3]{MR1857292}, for any measurable set $\Omega \subset \Rd$ we define its perimeter $\Per(\Omega)$ as 
\begin{align}\label{Perimeter_def}
\Per(\Omega) = \sup \left\{ \int_{\Rd}\ind_{\Omega}(x)\mathrm{div}\, \phi (x)\, \ud x:\, \phi \in C_c^1(\Rd,\Rd),\, \norm{\phi}_{\infty}\leq 1 \right\}.
\end{align}
We say that $\Omega$ is of finite perimeter if $\Per(\Omega)<\infty$.
We mention that, by \cite[Proposition 3.62]{MR1857292}, for any open $\Omega$ with Lipschitz boundary $\partial \Omega$ and finite Hausdorff measure $\sigma (\partial \Omega)$ we have 
\begin{align*}
\Per(\Omega) = \sigma (\partial \Omega).
\end{align*}

For any $\Omega \subset\Rd$ with finite Lebesgue measure $|\Omega|$, we define the covariance function $g_\Omega$ of $\Omega$ as follows
\begin{align}\label{g_omega_defn}
g_\Omega (y)=|\Omega\cap (\Omega + y)|=\int_{\Rd}\,\ind_{\Omega}(x)\,\ind_{\Omega}(x-y) \ud x,\quad y\in \Rd.
\end{align}
We recall some important properties of the function $\g$.
Let $\Omega \subset \Rd$ have finite measure. Then, by \cite[Proposition 2, Theorem 13 and Theorem 14]{MR2816305}, 
$\g$ is symmetric, nonnegative, bounded from above by $|\Omega|$,
and
$g_{\Omega} \in \calC_0(\Rd)$.
Moreover, if $\Per(\Omega)<\infty$, then
$g_\Omega$ is Lipschitz with
\begin{equation}\label{prop_iv}
2\norm{g_\Omega}_{\mathrm{Lip}} \leq \Per(\Omega).
\end{equation}
Moreover, for all $r>0$ the limit $\lim_{r\to 0^+}\frac{\g(0)-\g(ru) }{r}$ exists, is finite and
\begin{equation}\label{prop_v}
\Per(\Omega) =  \frac{\Gamma((d+1)/2)}{\pi^{(d-1)/2}} \int_{\mathbb{S}^{d-1}}\lim_{r\to 0^+}\frac{ \g(0)- \g(ru)}{r} \sigma (\ud u).
\end{equation}
In particular, there is a constant $C=C(\Omega)>0$ such that
\begin{align}
0\leq g_\Omega (0)- g_\Omega (y) \leq C (1\wedge |y|).\label{g_Omega_bound}
\end{align}

By Cygan, Grzywny \cite[Lemma 3]{MR3563041}, the related function $H(t)$ has the following form	
\begin{equation}\label{H_formula}
H(t) = \int _{\Rd}\left( g_\Omega (0) -\g (y)\right) p_t(\ud y),
\end{equation}
and by \cite[Proof of Lemma 1]{MR3563041},
\begin{equation}\label{perX}
\Per_{\nu} (\Omega) = \int_{\Rd} \left(\g(0)-\g(y)\right) \nu(\ud y).
\end{equation}
By \cite[Theorem 3]{MR3563041}, if $\Omega \subset \Rd$ is an open set such that $|\Omega| < \infty$ and $\Per(\Omega)<\infty$ (i.e. $\mathds{1}_{\Omega} \in \operatorname{BV}(\Rd)$),  then 
\begin{align}\label{eq222}
t^{-1}H(t) = t^{-1}(\g(0) - P_t\g (0)),
\end{align}
which converges to $ -\calL \g(0) = \Per _{\nu}(\Omega)$ as $t$ tends to $0$.

\section{Main results and proofs}
\subsection{Convolution semigroups for nonlocal operators on $\Rd$}
\begin{lemma}\label{thm:lem2}
Assume that $\psi^* \in \WUSC{\al}{1}{\uC}$ for some $\al \in (0,1)$.
If $f \in \calC_0^{\beta}$ for some $\beta \in (\al,1]$, then $\calL f \in \calC_0^{\beta-\al}$ and $\|\calL f\|_{\beta-\al} \leq C_1 (1-\al/\beta)^{-1}  h(1) \|f\|_{\beta}$, where $C_1=C_1(c_d, \overline{C})$.
In particular, if $\beta \in (2\al, 1]$, then $\calL f \in \mathcal{D}(\calL)$. 
\end{lemma}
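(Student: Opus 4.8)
The plan is to verify separately the two pieces of the norm, $\|\calL f\|_\infty$ and $\vertiii{\calL f}_{\beta-\al}$, starting from an integral representation of $\calL f$, and the whole argument rests on two preliminary facts. First, by Lemma~\ref{lem:equiv} the condition $\Aa$ gives $\Ab$ with constant $c_d\uC$; since here $\theta_0=1$, combining this with the continuity of $h$ yields the one-sided scaling $h(s)\le c_d\uC\, s^{-\al}h(1)$ for $0<s\le 1$ (take $\lambda=s/r$ and let $r\uparrow 1$ in $\Ab$). Second, applying \eqref{eq:lem26} with $f(s)=s^\beta$ and $r\le 1$, discarding the nonpositive boundary term, and using \eqref{eq:h_estimate} together with the scaling just obtained, we get the moment bound
$$\int_{|y|<r}|y|^\beta\,\nu(\ud y)\ \le\ \frac{\beta}{\beta-\al}\,c_d\uC\, h(1)\, r^{\beta-\al},\qquad 0<r\le 1,$$
where $\beta-\al>0$ because $\beta>\al$. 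Taking $r=1$ shows $\int_{|y|<1}|y|^\beta\nu(\ud y)<\infty$, so \eqref{eq:LH} applies and $\calL f(x)=\int_{\Rd}\bigl(f(x+y)-f(x)\bigr)\nu(\ud y)$ for $f\in\calC_0^{\beta}$, the integral being absolutely convergent.

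For $\|\calL f\|_\infty$, I split the integral at $|y|=1$: on $\{|y|\ge 1\}$ bound $|f(x+y)-f(x)|\le 2\|f\|_\infty$ and use $\int_{|y|\ge1}\nu(\ud y)\le h(1)$, while on $\{|y|<1\}$ bound $|f(x+y)-f(x)|\le\vertiii{f}_\beta|y|^\beta$ and apply the moment bound with $r=1$; recalling $\bigl(1-\al/\beta\bigr)^{-1}=\beta/(\beta-\al)\ge 1$, this gives $\|\calL f\|_\infty\lesssim\bigl(1-\al/\beta\bigr)^{-1}h(1)\|f\|_\beta$. For the seminorm, fix $x,x'$ with $\delta:=|x-x'|\le 1$; the integrand $[f(x+y)-f(x)]-[f(x'+y)-f(x')]$ admits two bounds: comparing each bracket to its base point gives $\le 2\vertiii{f}_\beta|y|^\beta$ whenever $|y|<1$, and rewriting it as $[f(x+y)-f(x'+y)]-[f(x)-f(x')]$ gives $\le 2\vertiii{f}_\beta\delta^\beta$ for every $y$. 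I then split the $y$-integral into $\{|y|<\delta\}$, $\{\delta\le|y|<1\}$, $\{|y|\ge1\}$: on the first use the $|y|^\beta$ bound with the moment bound at $r=\delta$; on the second use the $\delta^\beta$ bound with $\int_{|y|\ge\delta}\nu(\ud y)\le h(\delta)\le c_d\uC\,\delta^{-\al}h(1)$; on the third use the $\delta^\beta$ bound with $\int_{|y|\ge1}\nu(\ud y)\le h(1)$. Reconciling powers via $\delta\le1$ and $\al>0$, all three contributions are $\lesssim\bigl(1-\al/\beta\bigr)^{-1}h(1)\vertiii{f}_\beta\,\delta^{\beta-\al}$, and since $\beta-\al\in(0,1)$ this is exactly a bound on $\vertiii{\calL f}_{\beta-\al}$. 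Finally $\calL f\in\calC_0(\Rd)$: it is continuous because it is $(\beta-\al)$-H\"older, and $\calL f(x)\to 0$ as $|x|\to\infty$ by dominated convergence, using the same two-region $\nu$-integrable majorant. Adding the two estimates and using $\vertiii{f}_\beta,\|f\|_\infty\le\|f\|_\beta$ gives $\|\calL f\|_{\beta-\al}\le C_1\bigl(1-\al/\beta\bigr)^{-1}h(1)\|f\|_\beta$ with $C_1=C_1(c_d,\uC)$.

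For the last assertion, if $\beta\in(2\al,1]$ then $\beta-\al\in(\al,1]$, so the part already proved, applied with $\calL f$ in place of $f$ and $\beta-\al$ in place of $\beta$, shows $\calL(\calL f)\in\calC_0^{\beta-2\al}\subset\calC_0(\Rd)$. For a convolution semigroup, a function $u\in\calC_0(\Rd)$ whose pointwise integral $\int_{\Rd}\bigl(u(\cdot+y)-u(\cdot)\bigr)\nu(\ud y)$ again lies in $\calC_0(\Rd)$ belongs to $\mathcal{D}(\calL)$ (one has $P_tu-u=\int_0^t P_s\calL u\,\ud s$ and $s\mapsto P_s\calL u$ is norm-continuous into $\calC_0$), whence $\calL f\in\mathcal{D}(\calL)$. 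The only real care needed is in the bookkeeping of the region splittings so that every term carries exactly the factor $\delta^{\beta-\al}$, and in tracking that the constant blows up precisely like $\bigl(1-\al/\beta\bigr)^{-1}=\beta/(\beta-\al)$ as $\beta\downarrow\al$ — this enters solely through $\int_0^r s^{\beta-\al-1}\,\ud s=r^{\beta-\al}/(\beta-\al)$ in the moment bound, which is why one must compare $f$ to its base point on the small-$|y|$ region instead of using the crude global bound throughout.
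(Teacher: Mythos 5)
Your proof is correct and follows essentially the same route as the paper's: the same scaling input from Lemma~\ref{lem:equiv}, the same moment bound $\int_{|y|<r}|y|^\beta\nu(\ud y)\lesssim (1-\al/\beta)^{-1}h(1)r^{\beta-\al}$ (the paper gets it by a layer-cake/Fubini computation rather than via \eqref{eq:lem26}, which is equivalent), the same split of the H\"older increment into a near region bounded by $|y|^\beta$ and a far region bounded by $\delta^\beta$ times a tail mass controlled by $h$, and the same split at $|y|=1$ for the sup-norm. Your extra three-way split and your explicit handling of decay at infinity and of the final $\mathcal{D}(\calL)$ claim (which the paper leaves to the implicit content of \eqref{eq:LH}) are harmless refinements, not a different method.
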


\begin{proof}
By Lemma \ref{lem:equiv}, for all $\lambda \leq 1$ and $r \leq 1$,
\begin{equation}\label{eq:h_scaling}
h(\lambda r)\leq c \lambda^{-\al}h(r),
\end{equation}
where $c= c_d\uC$, and for all $\eps>0$, 
\begin{equation}
\int_{|y|<1} |y|^{\al+\eps} \nu(\ud y) < \infty.
\end{equation}
First, we will deal with $\vertiii{\calL f}_{\beta -\al }$. Consider $|x-y|\leq 1$. By \eqref{eq:LH},
\begin{align}
|\calL f(x)-\calL f(y)| &= \left|\int_{\Rd} \left(f(x+z)-f(x)-(f(y+z)-f(y))\right) \nu(\ud z) \right| \nonumber\\
& \leq \int_{\Rd} \left|f(x+z)-f(x)-f(y+z)+f(y)\right|  \nu(\ud z). \label{eq:int}
\end{align}
We split the integral above as follows
$$
\int_{\Rd} = \int_{|z| \leq |x-y|} + \int_{|z| > |x-y|} =: \mathrm{I}_1 + \mathrm{I}_2.
$$
We will first deal with $\mathrm{I}_1$. Denote $L= \vertiii{f}_{\beta}$. We have
\begin{equation}\label{eq:Holder}
|f(x)-f(y)|\leq L |x-y|^{\beta}.
\end{equation}
By 
\eqref{eq:Holder}, Fubini theorem, \eqref{eq:h_estimate} and  \eqref{eq:h_scaling} we have
\begin{align*}
\mathrm{I}_1 
&\leq  \int_{|z| \leq |x-y|} \left(|f(x+z)-f(x)|+|f(y+z)-f(y)|\right) \nu(\ud z) \leq 2 L \int_{|z| \leq |x-y|} |z|^{\beta} \nu(\ud z) \\
 &= 2L \int_{|z| \leq |x-y|} \int_0^{|z|^{\beta}} \ud s \nu(\ud z) = 2L\int_0^{|x-y|^{\beta}}  \int_{s^{1/\beta} \leq |z| \leq |x-y|} \nu(\ud z) \ud s \leq 2L\int_0^{|x-y|^{\beta}}  \int_{|z| \geq s^{1/\beta}} \nu(\ud z) \ud s \\
&\leq 2L\int_0^{|x-y|^{\beta}} h(s^{1/\beta}) \ud s \leq 2Lch(1) \int_0^{|x-y|^{\beta}} s^{-\al/\beta} \ud s = 2Lch(1) (1-\al/\beta)^{-1} |x-y|^{\beta-\al}.
\end{align*}
Now we will estimate $\mathrm{I}_2$. Using again 
\eqref{eq:Holder}, \eqref{eq:h_estimate} and  \eqref{eq:h_scaling} we get
\begin{align*}
\mathrm{I}_2 
&\leq \int_{|z| > |x-y|}  \left(|f(x+z)-f(y+z)| + |f(x)-f(y)|\right) \nu(\ud z) \\
&\leq 2L |x-y|^{\beta} \int_{|z| > |x-y|} \nu(\ud z) \leq 2L |x-y|^{\beta} h(|x-y|) \leq 2L c h(1) |x-y|^{\beta-\al}.
\end{align*} 
Hence
\begin{equation}\label{eq:fest1}
\vertiii{\calL f}_{\beta -\al } \leq \left(1+ (1-\al/\beta)^{-1}\right) 2ch(1) \vertiii{f}_{\beta}.
\end{equation}

Futhermore,
\begin{align*}
|\calL f(x)| \leq \int_{\Rd} |f(x+y) -f(x)| \, \nu (\ud y).
\end{align*}
We split the integral above as follows
$$
\int_{\Rd} = \int_{|y|<1} + \int_{|y|\geq1} =: \mathrm{I}_3 + \mathrm{I}_4.
$$
Proceeding as in the case of $\mathrm{I}_1$, we obtain
\begin{align*}
\mathrm{I}_3 \leq \vertiii{f}_{\beta} \int_{|y|<1} |y|^{\beta} \nu (\ud y) \leq \vertiii{f}_{\beta} \frac{c}{1-\al/\beta} h(1).
\end{align*}
Next,
\begin{align*}
\mathrm{I}_4 \leq  2 \|f\|_{\infty}  \int_{|y|\geq1} \nu (\ud y) \leq 2\|f\|_{\infty} h(1).
\end{align*}
Therefore
\begin{align}
\|\calL f\|_{\infty} \leq \frac{c}{1-\al/\beta} h(1) \vertiii{f}_{\beta} + 2h(1) \|f\|_{\infty} &\leq \left(\frac{c}{1-\al/\beta} +2\right) h(1) \|f\|_{\beta} \nonumber\\ 
&\leq \frac{3c}{1-\al/\beta} h(1) \|f\|_{\beta} \label{eq:lfe1}.
\end{align}
By \eqref{eq:fest1} and \eqref{eq:lfe1},
\begin{align}
\|\calL f\|_{\beta-\al} &\leq 2h(1) \|f\|_{\infty} + \left(2+ \frac{3}{1-\al/\beta}\right) ch(1) \vertiii{f}_{\beta} \nonumber\\
&\leq \left(2+ \left(2+\frac{3}{1-\al/\beta}\right) c\right) h(1) \|f\|_{\beta} \leq \frac{7c}{1-\al/\beta} h(1) \|f\|_{\beta} \label{eq:lfe2}.
\end{align}
The proof is complete.
\end{proof}

\begin{corollary}\label{thm:cor1}
Assume that $\psi^* \in \WUSC{\al}{1}{\uC}$ for some $\al \in (0,1)$.
If $f \in \calC_0^{\beta}$ for some $\beta \in (n\al,1]$, then $\calL^k f \in \calC_0^{\beta-k\al}$ for $k \in \{1, \ldots, n\}$ and $\calL^k f \in \mathcal{D}(\calL)$ for $k \in \{1, \dots, n-1\}$.
\end{corollary}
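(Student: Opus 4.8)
The plan is to argue by induction on $k$, applying Lemma \ref{thm:lem2} repeatedly; the content of the corollary is essentially a bookkeeping of H\"older exponents, since each application of $\calL$ costs $\al$ in regularity and the lemma may be reapplied as long as the remaining exponent stays in the admissible range.

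First I would establish the regularity statement $\calL^k f \in \calC_0^{\beta - k\al}$ for $k \in \{1,\dots,n\}$. Put $\gamma_k := \beta - k\al$ for $0 \le k \le n$. Since $\beta > n\al$ we have $\gamma_k \ge \gamma_n > 0$ and $\gamma_k \le \beta \le 1$, so each $\gamma_k$ lies in $(0,1]$; moreover, for $0 \le k \le n-1$ we have $\gamma_k = \beta - k\al > (k+1)\al - k\al = \al$, because $\beta > n\al \ge (k+1)\al$. Thus $\gamma_k \in (\al,1]$ for $0 \le k \le n-1$. Starting from $\calL^0 f = f \in \calC_0^{\gamma_0} = \calC_0^{\beta}$ and iterating: if $\calL^k f \in \calC_0^{\gamma_k}$ for some $k \le n-1$, then $\gamma_k \in (\al,1]$, so Lemma \ref{thm:lem2} applies to $\calL^k f$ and yields $\calL^{k+1} f = \calL(\calL^k f) \in \calC_0^{\gamma_k - \al} = \calC_0^{\gamma_{k+1}}$. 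This gives $\calL^k f \in \calC_0^{\beta-k\al}$ for all $k \in \{1,\dots,n\}$. (At each step the integral formula \eqref{eq:LH} for $\calL(\calL^k f)$ is legitimate because $\gamma_k > \al$ forces $\int_{|y|<1}|y|^{\gamma_k}\nu(\ud y)<\infty$ by Lemma \ref{lem:equiv}, $\Ac$; this is already built into the hypotheses of Lemma \ref{thm:lem2}.)

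For the domain statement I would invoke the ``in particular'' clause of Lemma \ref{thm:lem2}: if $g \in \calC_0^{\gamma}$ with $\gamma \in (2\al,1]$, then $\calL g \in \mathcal{D}(\calL)$. Fix $k \in \{1,\dots,n-1\}$ and apply this with $g = \calL^{k-1} f$, which by the previous paragraph belongs to $\calC_0^{\gamma_{k-1}}$ with $\gamma_{k-1} = \beta - (k-1)\al$. Since $k+1 \le n$, we have $\beta > n\al \ge (k+1)\al$, hence $\gamma_{k-1} = \beta - (k-1)\al > (k+1)\al - (k-1)\al = 2\al$, and also $\gamma_{k-1} \le \beta \le 1$. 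Therefore $\gamma_{k-1} \in (2\al,1]$, and Lemma \ref{thm:lem2} gives $\calL^k f = \calL(\calL^{k-1} f) \in \mathcal{D}(\calL)$.

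There is no genuine obstacle beyond tracking the index ranges; the only point needing care is verifying that the function fed into Lemma \ref{thm:lem2} has H\"older exponent in $(\al,1]$ for the regularity claim, respectively in $(2\al,1]$ for the domain claim, which is precisely where the stronger hypothesis $\beta > n\al$ (rather than merely $\beta > \al$) is used.
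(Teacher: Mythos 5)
Your argument is correct and is exactly the intended one: the paper states Corollary \ref{thm:cor1} without proof as an immediate iteration of Lemma \ref{thm:lem2}, and your induction with the exponent bookkeeping $\gamma_k=\beta-k\al>\al$ for $k\le n-1$ (resp.\ $\gamma_{k-1}>2\al$ for the domain claim) is precisely the verification that the lemma applies at each step.
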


It is well-known that for $f \in \mathcal{D}(\calL)$ and $t\geq 0$, $P_{t}$ is differentiable and $\frac{\ud}{\ud t} P_tf = \calL P_t f = P_t \calL f$, see e.g. Pazy \cite[Theorem 1.2.4 c)]{MR710486}. Therefore, if $\calL^k f \in \mathcal{D}(\calL)$ for $k \in \{1, \dots, n-1\}$, then $\frac{\ud^n}{\ud t^n} P_tf = \calL^{n} P_t f = P_t \calL^{n} f$.
To apply this result, we will use the fact that for $t_0>0$, $P_{t_0}(\calC^{\beta}_0) \subset \calC^{\beta}_0$. Indeed, 
\begin{align*}
|P_{t_0}f(x)-P_{t_0}f(y)| &\leq \int_{\Rd} \left|f(x+z)-f(x)-f(y+z)+f(y)\right| p_{t_0} (\ud z) \\
&\leq \int_{\Rd} \left(|f(x+z)-f(y+z)| + |f(y)-f(x)| \right) p_{t_0} (\ud z) \\
&\leq 2L |x-y|^{\beta} \int_{\Rd} p_{t_0}(\ud z) = 2L |x-y|^{\beta}.
\end{align*}

\begin{theorem}\label{thm:thm1}
Assume that $\psi^* \in \WUSC{\al}{1}{\uC}$ for some $\al \in (0,1)$.
If $f \in \calC_0^{\beta}$ for some $\beta \in (n\al,1]$, then 
$$
\lim_{t \to 0^+} t^{-n} \, \left(P_tf - \sum_{k=0}^{n-1} \frac{t^k}{k!} \calL^k f \right) = \frac{1}{n!} \calL^n f.
$$
\end{theorem}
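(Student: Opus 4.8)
The plan is to realise $t\mapsto P_tf$ as an $n$-times continuously differentiable curve in $\calC_0(\Rd)$ and then to expand it by Taylor's formula with integral remainder. First I would record the regularity we have. Since $\beta>n\al\ge\al$, Lemma \ref{lem:equiv} (condition \Ac\ with $\eps=\beta-\al$) gives $\int_{|y|<1}|y|^{\beta}\,\nu(\ud y)<\infty$, so \eqref{eq:LH} applies to $f$ and, by Lemma \ref{thm:lem2}, $\calL f\in\calC_0^{\beta-\al}\subset\calC_0(\Rd)$; in particular $f\in\mathcal{D}(\calL)$. Invoking Corollary \ref{thm:cor1}, we also get $\calL^kf\in\mathcal{D}(\calL)$ for $k=1,\dots,n-1$ and $\calL^nf\in\calC_0^{\beta-n\al}\subset\calC_0(\Rd)$. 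Then, by the differentiation rule recalled after Corollary \ref{thm:cor1} (Pazy \cite[Theorem 1.2.4 c)]{MR710486}), $g(t):=P_tf$ is $n$ times differentiable on $[0,\infty)$ with $g^{(j)}(t)=P_t\calL^jf$ for $0\le j\le n$, and $g^{(n)}$ is continuous because $\{P_t\}$ is strongly continuous on $\calC_0(\Rd)$ and $\calL^nf\in\calC_0(\Rd)$.

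Next I would apply Taylor's formula with integral remainder to $g$ (valid for a $C^n$ curve in a Banach space, or just apply the scalar version at each point $x$), obtaining
$$
P_tf=\sum_{k=0}^{n-1}\frac{t^k}{k!}\calL^kf+\frac{1}{(n-1)!}\int_0^t(t-s)^{n-1}P_s\calL^nf\,\ud s .
$$
Since $\frac{1}{(n-1)!}\int_0^t(t-s)^{n-1}\,\ud s=\frac{t^n}{n!}$, subtracting $\frac{t^n}{n!}\calL^nf$ and dividing by $t^n$ gives
$$
t^{-n}\Big(P_tf-\sum_{k=0}^{n-1}\frac{t^k}{k!}\calL^kf\Big)-\frac{1}{n!}\calL^nf=\frac{t^{-n}}{(n-1)!}\int_0^t(t-s)^{n-1}\big(P_s\calL^nf-\calL^nf\big)\,\ud s .
$$
Taking the supremum norm, bounding the integrand by $\sup_{0\le s\le t}\|P_s\calL^nf-\calL^nf\|_{\infty}$ and using $\frac{t^{-n}}{(n-1)!}\int_0^t(t-s)^{n-1}\,\ud s=\frac{1}{n!}$, the left-hand side is at most $\frac{1}{n!}\sup_{0\le s\le t}\|P_s\calL^nf-\calL^nf\|_{\infty}$, which tends to $0$ as $t\to0^+$ by strong continuity of $\{P_t\}$ on $\calC_0(\Rd)$. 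This proves the theorem, in fact with convergence in the supremum norm.

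An equivalent and slightly more elementary route is induction on $n$: the base case $n=1$ is just the statement $f\in\mathcal{D}(\calL)$ (established as above since $\beta>\al$); for the step one applies the inductive hypothesis to $\calL f\in\calC_0^{\beta-\al}$ (Lemma \ref{thm:lem2}) and integrates the resulting expansion of $P_s\calL f$ against the identity $P_tf-f=\int_0^t P_s\calL f\,\ud s$, using $\int_0^t\frac{s^{k-1}}{(k-1)!}\,\ud s=\frac{t^k}{k!}$ and estimating exactly as above.

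The only real obstacle is the bookkeeping in the first step: one must be sure that $f,\calL f,\dots,\calL^{n-1}f$ all belong to $\mathcal{D}(\calL)$, so that $t\mapsto P_tf$ may legitimately be differentiated $n$ times — this is precisely what Lemma \ref{thm:lem2} and Corollary \ref{thm:cor1} are designed to supply, the hypothesis $\beta>n\al$ being exactly what keeps $\calL^nf$ in $\calC_0^{\beta-n\al}$. It is worth noting that the remainder estimate uses only $\calL^nf\in\calC_0(\Rd)$ together with strong continuity of the semigroup, so no control of $\calL^{n+1}f$ (which need not exist) is required.
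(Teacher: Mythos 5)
Your proposal is correct and follows essentially the same route as the paper: establish via Corollary \ref{thm:cor1} that $\calL^k f\in\mathcal{D}(\calL)$ for $k\le n-1$, apply Taylor's theorem to $t\mapsto P_tf$, and conclude from the strong continuity of $P_t$ at $t=0$. Your use of the integral form of the remainder (rather than the paper's Lagrange form $\frac{t^n}{n!}P_{\theta_0}\calL^nf$) is a minor but welcome refinement, since the mean-value form of Taylor's theorem is delicate for Banach-space-valued curves, whereas the integral remainder argument is unambiguous and yields convergence in the supremum norm.
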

\begin{proof}
By Corollary \ref{thm:cor1}, $\calL^k f \in \mathcal{D}(\calL)$ for $k \in \{1, \dots, n-1\}$, hence $P_t f$ is $n$ times differentiable. By Taylor's theorem applied to $t \mapsto P_tf$,
$$
P_t f = \sum_{k=0}^{n-1} \frac{t^k}{k!} \calL^k f + \frac{t^n}{n!} P_{\theta_0} \calL^n f
$$
for some $\theta_0 \in (0,t)$. The thesis follows from the right continuity of $P_t$ at $t=0$.
\end{proof}
Theorem \ref{thm:thm1}, Lemma \ref{lem:equiv} and \eqref{prop_iv} give the following result.
\begin{corollary}\label{thm:cor2}
Assume that
there exists $C>0$ such that for all $\lambda\leq 1$ and $r<1$, $h(\lambda r) \leq C \lambda^{-\alpha} h(r)$
for some $\al \in (0,1)$.
Let $n\geq 2$. If $n\al<1$, then 
\begin{align*}
\lim_{t \to 0^+} t^{-n} \left(H(t) - t\Per_{\nu}(\Omega) + \sum_{k=2}^{n-1}\frac{t^k}{k!} \calL^k \g(0)\right) = -\frac{1}{n!} \calL^n \g(0).
\end{align*}
\end{corollary}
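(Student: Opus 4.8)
The plan is to apply Theorem~\ref{thm:thm1} to the covariance function $\g$ and then to convert the resulting expansion of $P_t\g(0)$ into one for $H(t)$ by means of \eqref{H_formula}. First I would observe that the assumption of the corollary is exactly condition $\Ab$ of Lemma~\ref{lem:equiv} with $\theta_0=1$; that lemma then yields $\Aa$, i.e.\ $\psi^*\in\WUSC{\al}{1}{c_d C}$, so the standing hypothesis of Theorem~\ref{thm:thm1} is in force. Next, under the standing assumptions $|\Omega|<\infty$ and $\Per(\Omega)<\infty$, the function $\g$ is bounded, vanishes at infinity, and is globally Lipschitz by \eqref{prop_iv}, hence $\g\in\calC_0^1$. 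Since $n\al<1$ we have $1\in(n\al,1]$, so Corollary~\ref{thm:cor1} guarantees that $\calL^k\g(0)$ is well defined for $k\in\{1,\dots,n\}$, and Theorem~\ref{thm:thm1}, applied with $f=\g$ and $\beta=1$ and evaluated at $x=0$, gives
\[
\lim_{t\to0^+}t^{-n}\Bigl(P_t\g(0)-\sum_{k=0}^{n-1}\frac{t^k}{k!}\calL^k\g(0)\Bigr)=\frac{1}{n!}\calL^n\g(0).
\]

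It then remains to rewrite the left-hand side. Since $p_t$ is a probability measure, \eqref{H_formula} reads $H(t)=\g(0)-P_t\g(0)$, so $P_t\g(0)=\g(0)-H(t)$ and the $k=0$ term $\calL^0\g(0)=\g(0)$ cancels. Moreover $\calL\g(0)=-\Per_\nu(\Omega)$ by \eqref{eq222} (equivalently, by \eqref{perX} together with \eqref{eq:LH}, which applies for $\beta=1$ because $\int_{|y|<1}|y|\,\nu(\ud y)\le\int_{\Rd}(1\wedge|z|)\,\nu(\ud z)<\infty$), so the $k=1$ term equals $-t\,\Per_\nu(\Omega)$. Substituting these identities into the displayed limit and multiplying by $-1$ produces
\[
\lim_{t\to0^+}t^{-n}\Bigl(H(t)-t\,\Per_\nu(\Omega)+\sum_{k=2}^{n-1}\frac{t^k}{k!}\calL^k\g(0)\Bigr)=-\frac{1}{n!}\calL^n\g(0),
\]
which is the assertion (the sum being empty when $n=2$).

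Beyond Theorem~\ref{thm:thm1} itself there is no genuine analytic difficulty; the points that need care are the verification that $\g\in\calC_0^1$, which rests on the recalled regularity of covariance functions of sets of finite perimeter (in particular on $|\Omega|<\infty$ and $\Per(\Omega)<\infty$), and the index bookkeeping in the last step — peeling off the $k=0$ term against the $\g(0)$ coming from $H(t)=\g(0)-P_t\g(0)$, and recognising the $k=1$ term as $-t\,\Per_\nu(\Omega)$ via $\calL\g(0)=-\Per_\nu(\Omega)$.
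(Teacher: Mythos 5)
Your proposal is correct and follows exactly the route the paper intends: the paper gives no written proof but states that the corollary follows from Theorem~\ref{thm:thm1}, Lemma~\ref{lem:equiv} and \eqref{prop_iv}, which is precisely the chain you spell out (translate the $h$-scaling hypothesis into $\psi^*\in\WUSC{\al}{1}{c_dC}$, note $\g\in\calC_0^1$ via \eqref{prop_iv}, apply Theorem~\ref{thm:thm1} with $\beta=1$, and rewrite using $H(t)=\g(0)-P_t\g(0)$ and $\calL\g(0)=-\Per_\nu(\Omega)$). The bookkeeping of the $k=0$ and $k=1$ terms and the sign flip are all handled correctly.
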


\begin{example}\label{ex_stable}
If $\nu^{(\al)}$ is an $\al$-stable L\'evy measure, $\al \in (0,1)$, then the H\"older space $\calC_0^{\beta}$ is contained in the domain of $\calL =-(-\Delta)^{\al/2}$ for any $\beta \in (\al,1]$, and we have
\begin{equation*}
\calL f(x) = \int_{\Rd\setminus\{0\}}  (f(x+y)-f(x)) \, \nu^{(\al)}(\ud y), \qquad f \in \calC_0^{\beta},\; x \in \Rd.
\end{equation*}
The associated semigroup $(p_t)_{t\geq0}$ is the $\al$-stable semigroup in $\Rd$, determined by $\psi(\xi) = |\xi|^{\al}$. We have $\psi(r \xi) = r^{\al}\psi(\xi)$, so in particular $\psi^* \in \WUSC{\al}{0}{1}$. If $f \in \calC_0^{\beta}$ for some $\beta \in (n\al,1]$, then by Corollary \ref{thm:cor1}, $\calL^k f \in \mathcal{D}(\calL)$ for $k \in \{1, \dots, n-1\}$ and $\calL^k f \in \calC_0^{\beta-k\al}$ for $k \in \{1, \ldots, n\}$, and by Theorem \ref{thm:thm1}, 
\begin{equation}\label{eq:stable}
\lim_{t \to 0^+} t^{-n} \, \left(-P_tf + \sum_{k=0}^{n-1} \frac{(-1)^{k-1} t^k}{k!} \left(-(-\Delta)^{k\al/2}\right) f \right) = \frac{(-1)^{n}}{n!} \left(-(-\Delta)^{n\al/2}\right)f,
\end{equation}
since $\left((-\Delta)^{\al/2}\right)^n =(-\Delta)^{n\al/2}$ for $n\al<2$, see \cite[(1.1.12)]{MR0350027}.
By Corollary \ref{thm:cor2},
\begin{align*}
\lim_{t \to 0^+} t^{-n} \left(H(t) - \sum_{k=1}^{n-1}\frac{(-1)^{k-1} t^k}{k!} \Per_{\nu^{(k\al)}} (\Omega)\right) = \frac{(-1)^{n-1}}{n!} \Per_{\nu^{(n\al)}} (\Omega).
\end{align*}
\end{example}

\begin{theorem}\label{thm:thm2}
Assume that for all $\al \in (0,1)$, $\psi^* \in \WUSC{\al}{1}{C \al^{-1}}$ for some $C>0$.
If $f \in \calC_0^{\beta}$ for some $\beta \in (0,1]$, then there exists $t_0>0$ such that for all $t\in(0,t_0)$,
$$
P_t f = \sum_{k=0}^{\infty} \frac{t^k}{k!} \calL^k f
$$
in $\calC_0(\Rd)$.
\end{theorem}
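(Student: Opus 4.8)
The plan is to show that the series $\sum_{k\ge 0}\frac{t^k}{k!}\calL^k f$ converges absolutely in the Banach space $\calC_0^{\beta/2}$ for $t$ small — hence in $\calC_0(\Rd)$ — and then to identify its sum with $P_t f$ via the Taylor expansion with remainder that already appears in the proof of Theorem~\ref{thm:thm1}. Here $\calL^k f:=\calL(\calL^{k-1}f)$, each step being legitimate by Corollary~\ref{thm:cor1} for the scaling indices chosen below, and the resulting function does not depend on that choice (it is the fixed operator $\calL$, applied via \eqref{eq:LH}, that is iterated). If $\nu\equiv 0$ the assertion is trivial ($\calL=0$, $P_t=\mathrm{id}$), so assume $\nu\not\equiv 0$, whence $h(1)=\int_{\Rd}(1\wedge|z|^2)\,\nu(\ud z)\in(0,\infty)$; since the hypothesis persists under enlarging $C$, I take $C\ge 1$.

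The core step is a quantitative iteration of Lemma~\ref{thm:lem2} with an $n$-dependent scaling index. Fix $n\ge 1$, put $\alpha_n:=\beta/(2n)\in(0,1)$, and observe that by hypothesis $\psi^*\in\WUSC{\alpha_n}{1}{\overline{C}_n}$ with $\overline{C}_n:=C\alpha_n^{-1}=2Cn/\beta\ge 1$. Since $\beta\in(n\alpha_n,1]$, Corollary~\ref{thm:cor1} with $\alpha=\alpha_n$ gives $\calL^k f\in\calC_0^{\,\beta-k\alpha_n}$ for $k=1,\dots,n$ and $\calL^k f\in\mathcal{D}(\calL)$ for $k=1,\dots,n-1$. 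Writing $\gamma_j:=\beta-j\alpha_n$ — so that $\gamma_0=\beta$, $\gamma_n=\beta/2$, and $\gamma_j>\alpha_n$ for $0\le j\le n-1$ — I apply the estimate established in the proof of Lemma~\ref{thm:lem2}, namely $\|\calL g\|_{\gamma-\alpha}\le \tfrac{7c_d\overline{C}}{1-\alpha/\gamma}\,h(1)\,\|g\|_\gamma$, to $g=\calL^j f$ with exponent $\gamma_j$ and scaling index $\alpha_n$, for $j=0,\dots,n-1$. Multiplying these inequalities and using $1-\alpha_n/\gamma_j=\gamma_{j+1}/\gamma_j$, the otherwise singular prefactors telescope:
$$\prod_{j=0}^{n-1}\Bigl(1-\frac{\alpha_n}{\gamma_j}\Bigr)^{-1}=\frac{\gamma_0}{\gamma_n}=\frac{\beta}{\beta/2}=2,$$
so $\|\calL^n f\|_{\beta/2}\le 2\bigl(7c_d\,\overline{C}_n\,h(1)\bigr)^n\|f\|_\beta=2\,A^n n^n\|f\|_\beta$, where $A:=14\,c_d\,C\,h(1)/\beta$.

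Next I would invoke Stirling's inequality $n^n\le e^n n!$ to obtain, for every $n\ge 0$ (the case $n=0$ being just $\|f\|_{\beta/2}\le\|f\|_\beta$),
$$\frac{t^n}{n!}\,\|\calL^n f\|_{\beta/2}\le 2\|f\|_\beta\,(Aet)^n.$$
Set $t_0:=(Ae)^{-1}=\beta\bigl(14\,c_d\,C\,h(1)\,e\bigr)^{-1}$. For $t\in(0,t_0)$ the right-hand side is summable, and since $\calC_0^{\beta/2}$ is complete and $\|\cdot\|_\infty\le\|\cdot\|_{\beta/2}$, the series $\sum_{k\ge 0}\frac{t^k}{k!}\calL^k f$ converges absolutely in $\calC_0^{\beta/2}$, hence in $\calC_0(\Rd)$; denote its sum by $S(t)$. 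To identify $S(t)$ with $P_t f$, fix $t\in(0,t_0)$ and $n\ge 1$ and use the Taylor expansion from the proof of Theorem~\ref{thm:thm1} (valid since $\calL^k f\in\mathcal{D}(\calL)$ for $k\le n-1$, with $\alpha=\alpha_n$): there is $\theta\in(0,t)$ with $P_t f=\sum_{k=0}^{n-1}\frac{t^k}{k!}\calL^k f+\frac{t^n}{n!}P_\theta\calL^n f$, whose remainder has $\calC_0(\Rd)$-norm at most $\frac{t^n}{n!}\|\calL^n f\|_\infty\le\frac{t^n}{n!}\|\calL^n f\|_{\beta/2}\le 2\|f\|_\beta(t/t_0)^n\to 0$ as $n\to\infty$. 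Letting $n\to\infty$ gives $P_t f=S(t)$, which is the claim.

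I expect the main obstacle to be the estimate $\|\calL^n f\|_{\beta/2}\le 2A^n n^n\|f\|_\beta$ with the correct growth in $n$. Two tensions must be resolved simultaneously: one is \emph{forced} to send the scaling index $\alpha_n\downarrow 0$ (only then can $\calL$ be iterated arbitrarily often starting from a fixed $\calC_0^\beta$), which under the present hypothesis inflates the weak-upper-scaling constant $\overline{C}_n=C\alpha_n^{-1}$ linearly in $n$ and so injects the troublesome $n^n$; moreover the chain of bounds from Lemma~\ref{thm:lem2} drags along a product of factors $(1-\alpha_n/\gamma_j)^{-1}$ that individually blow up as $\gamma_{n-1}\downarrow\alpha_n$. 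The first tension is defused by Stirling ($n^n\lesssim e^n n!$), the second by stopping the iteration at the safe exponent $\gamma_n=\beta/2$ rather than running it down to $0$, which makes the product telescope to the harmless constant $2$; everything else is a routine geometric-series estimate.
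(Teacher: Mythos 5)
Your proposal is correct and follows essentially the same route as the paper's proof: the $N$-dependent scaling index $\alpha_N=\beta/(2N)$, the iterated bound from Lemma~\ref{thm:lem2} giving $\|\calL^N f\|\lesssim (cN)^N\|f\|_\beta$, Stirling to tame $N^N/N!$, and the Taylor remainder $\frac{t^N}{N!}P_{\theta}\calL^N f\to 0$. Your telescoping identity $\prod_{j}(1-\alpha_n/\gamma_j)^{-1}=\gamma_0/\gamma_n=2$ is a slightly cleaner way to control the product of singular prefactors than the paper's uniform bound $(1-1/N)^{-N}\leq \mathrm{const}$, but the argument is the same.
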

\begin{proof}
Without loss of generality, we can assume $h(1)=1$. 
For any $N \in \mathbb{N}$, let $\alpha = \alpha(N) = \beta/(2N)$. For $n \in \{1, 2, \ldots, N-1\}$, let $\beta_n = \beta - n\alpha$. 
Since $N\alpha = \beta/2 < \beta$, by the Proof of Theorem \ref{thm:thm1},
\begin{align*}
P_t f (x) = \sum_{n=0}^{N-1} \frac{t^n \calL^n f (x)}{n!} + \tilde{R}_N,
\end{align*}
where 
$$
\tilde{R}_N = \frac{t^N P_{\theta_0} \calL^N f(x)}{N!}.
$$
By \eqref{eq:lfe1}, for any $N \in \mathbb{N}$,
\begin{equation}\label{eq:lnf}
\|\calL^N f\|_{\infty}  
\leq \frac{7 C/\alpha}{1-\al/\beta_{N-1}}  \|\calL^{N-1} f\|_{\beta_{N-1}} .
\end{equation}
By \eqref{eq:lfe2}, for $k \in \{1, 2, \ldots N-1\}$,
\begin{equation}\label{eq:lfb}
\|\calL f\|_{\beta_k} 
\leq \frac{7C/\alpha}{1-\al/\beta_{k-1}} \|f\|_{\beta_{k-1}}.
\end{equation}
Using \eqref{eq:lnf} and applying \eqref{eq:lfb} $N$ times we get
\begin{align}
\|\calL^N f \|_{\infty} \leq \frac{7C/\al}{1-\al/\beta_{N-1}}
\|\calL^{N-1} f\|_{\beta_{N-1}} 
&\leq \left(\prod_{k=0}^{N-1}  \frac{7C/\al}{1-\al/\beta_k}
\right)  \|f\|_{\beta} \nonumber\\
&\leq  (7C)^N \left(\prod_{k=0}^{N-1} \frac{1/\al}{1-2\al/\beta}\right)  \|f\|_{\beta} \nonumber\\
&
= (7C)^N \left(\prod_{k=0}^{N-1} \frac{2N/\beta}{1-1/N}\right) 
 \|f\|_{\beta} \nonumber \\
&
=(14C/\beta)^N N^N\left(\prod_{k=0}^{N-1} \frac{1}{1-1/N}\right) 
\|f\|_{\beta} \nonumber\\
&\leq (14C/\beta)^N N^N e \|f\|_{\beta}\label{eq:lnfe}.
\end{align}
By the contractivity of $P_{\theta_0}$, \eqref{eq:lnfe} and Stirling's formula,
\begin{align*}
|\tilde{R}_N|& \leq \frac{\|t^N P_{\theta_0} \calL^N f \|_{\infty}}{N!} \leq \frac{t^N\|\calL^N f \|_{\infty}}{N!} 
\leq \frac{c C'^N N^N t^N}{\sqrt{2\pi N} (N/e)^N} \|f\|_{\beta} \leq \frac{c'\tilde{C}^N t^N}{\sqrt{N}} \|f\|_{\beta} 
\end{align*}
which tends to $0$ as $N \to \infty$. The proof is complete.
\end{proof}
\begin{example}
Let $\psi(\xi)=\log(1+|\xi|^2)$, i.e. $\calL =- \log(1 - \Delta)$.
Let $\alpha\in(0,1]$. Since, for $\lambda\geq 1$ and $x\geq 1$,
\begin{align*}
\log(1+\lambda x) &\leq \log(\lambda(1+x))
= \frac{1}{\alpha} \log(\lambda^{\alpha}(1+x)^{\alpha}) 
\leq \frac{1}{\alpha} \log(\lambda^{\alpha} (1+x)),
\end{align*}
and 
$$
\frac{\log(\lambda^{\alpha} (1+x))}{ \log(1+x)}\leq \frac{ \lambda^{\alpha}}{ \log 2},
$$
we have $\log(1+\cdot) \in \WUSC{\al}{1}{2/\al}$. Hence $\psi \in \WUSC{\al}{1}{4/\al}$,
that is $\psi$ satisfies the assumptions of Theorem \ref{thm:thm2}.
\end{example}

\begin{corollary}\label{cor3}
Assume that for all $\al \in (0,1)$ and $\lambda \leq 1$, $h(\lambda r) \leq C \al^{-1} \lambda^{-1} h(r)$ for some $C>0$.
Then, there exists $t_0>0$ such that for all $t\in(0,t_0)$,
\begin{align*}
H(t) = t\Per_{\nu}(\Omega) - \sum_{k=2}^{\infty}\frac{t^k}{k!} \calL^k \g(0).
\end{align*}
\end{corollary}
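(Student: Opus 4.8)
The plan is to apply Theorem \ref{thm:thm2} to $f = g_\Omega$ and then read off the expansion of $H(t)$ from $\eqref{H_formula}$. First I would check the hypotheses of Theorem \ref{thm:thm2}. By Lemma \ref{lem:equiv}, the assumed scaling of the concentration function $h$ is equivalent, up to the dimensional constant $c_d$, to $\psi^* \in \WUSC{\alpha}{1}{c_d C \alpha^{-1}}$ for every $\alpha \in (0,1)$, which is exactly the standing assumption of Theorem \ref{thm:thm2}. It then remains to place $g_\Omega$ in a H\"older space: since $\Per(\Omega)<\infty$ (as needed already for $\Per_\nu(\Omega)$ to make sense, and as in Corollary \ref{thm:cor2}), $\eqref{prop_iv}$ shows that $g_\Omega$ is Lipschitz, and together with $g_\Omega \in \calC_0(\Rd)$ and $0 \le g_\Omega \le |\Omega|$ this gives $g_\Omega \in \calC_0^1$. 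Hence Theorem \ref{thm:thm2} applies with $\beta = 1$ and $f = g_\Omega$: there is $t_0 > 0$ such that
$$
P_t g_\Omega = \sum_{k=0}^{\infty} \frac{t^k}{k!}\, \calL^k g_\Omega \qquad \text{in } \calC_0(\Rd),\quad t \in (0, t_0).
$$

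Next I would specialize this identity to the origin. Convergence in $\calC_0(\Rd)$ is uniform convergence, hence in particular pointwise, so $P_t g_\Omega(0) = \sum_{k \ge 0} \tfrac{t^k}{k!} \calL^k g_\Omega(0)$. By $\eqref{H_formula}$, equivalently $\eqref{eq222}$, one has $H(t) = g_\Omega(0) - P_t g_\Omega(0)$, so the $k = 0$ term cancels and
$$
H(t) = -\sum_{k=1}^{\infty} \frac{t^k}{k!}\, \calL^k g_\Omega(0) = -t\,\calL g_\Omega(0) - \sum_{k=2}^{\infty} \frac{t^k}{k!}\, \calL^k g_\Omega(0).
$$
Finally, since $\int_{|y|<1} |y|\, \nu(\ud y) < \infty$ by the standing integrability assumption on $\nu$, the operator $\calL$ acts on $g_\Omega$ by $\eqref{eq:LH}$, and $\eqref{perX}$ gives $\calL g_\Omega(0) = \int_{\Rd} (g_\Omega(y) - g_\Omega(0))\, \nu(\ud y) = -\Per_\nu(\Omega)$. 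Substituting yields the claimed expansion.

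All the substantive work sits in Theorem \ref{thm:thm2}, which is already in hand, so I expect no real obstacle here; the only points worth a line are the membership $g_\Omega \in \calC_0^1$ (from $\eqref{prop_iv}$) and the value of the first-order coefficient (from $\eqref{perX}$). The single thing to be careful about is that Theorem \ref{thm:thm2} asserts an equality of functions in $\calC_0(\Rd)$, which must be evaluated at $0$ before being compared with the scalar $H(t)$; since $\calC_0(\Rd)$ carries the supremum norm, pointwise evaluation is continuous and this causes no difficulty.
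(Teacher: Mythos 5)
Your proposal is correct and follows exactly the route the paper intends for this corollary (which it leaves unproved as an immediate consequence of Theorem \ref{thm:thm2}, \eqref{prop_iv}, \eqref{H_formula} and \eqref{perX}): apply Theorem \ref{thm:thm2} to $f=\g\in\calC_0^1$, evaluate at $0$, and identify $-\calL\g(0)=\Per_\nu(\Omega)$. The only caveat is typographical: the hypothesis as printed reads $h(\lambda r)\le C\al^{-1}\lambda^{-1}h(r)$, and your reduction to $\psi^*\in\WUSC{\al}{1}{c_d C\al^{-1}}$ via Lemma \ref{lem:equiv} tacitly reads this as $\lambda^{-\al}$ (the form appearing in condition \Ab\ and in Corollary \ref{thm:cor2}), which is clearly what is meant.
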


\begin{example}
Let $\nu$ be a finite measure on $\Rd$ and let $(p_t)_{t\geq0}$ be determined by
$$
\psi(\xi) = \int_{\Rd} (1-e^{i \langle \xi, z \rangle}) \, \nu(\ud z).
$$
In this case
\begin{align*}
\calL f (x) 
&= \int_{\Rd} \left(f(x+y)-f(x)\right) \nu(\ud y).
\end{align*}
Generator $\calL$ can be expressed as a convolution operator
$$
\calL f = \left(\nu - \nu(\Rd) \delta_0 \right) * f,
$$
therefore
$$
\calL^n f = (\nu - \nu(\Rd) \delta_0)^{*n} * f = \sum_{i=0}^{n} (-1)^{n-i} \binom{n}{i} \nu(\Rd)^{n-i} \nu^{*i} * f,
$$
where for $k\in\mathbb{N}$, $\mu^{*k}$ denotes the $k$-fold iteration of the convolution of measure $\mu$ with itself, i.e. $\mu^{*0} = \delta_0$ and $\mu^{*k} = \mu^{*(k-1)} * \mu$ for $k\geq1$.
It is well-known that, since $\nu$ is finite and $\calL$ is bounded, we have
$$
P_t = e^{t\calL}.
$$
Therefore
\begin{align*}
P_t = \sum_{n=0}^{\infty} \frac{t^n}{n!} \calL^n &= \sum_{n=0}^{\infty} \sum_{i=0}^{n} (-1)^{n-i} \frac{t^n}{i!(n-i)!}\nu(\Rd)^{n-i} \nu^{*i} 
\\
&=\sum_{i=0}^{\infty} \sum_{n=i}^{\infty} (-1)^{n-i} \frac{t^n}{i!(n-i)!}\nu(\Rd)^{n-i} \nu^{*i} \\
&= \sum_{j=0}^{\infty} \frac{(-t\nu(\Rd))^{j}}{j!} \sum_{i=0}^{\infty} \frac{t^i}{i!} \nu^{*i}  = 
 e^{-t\nu(\Rd)} \exp(t\nu),
\end{align*}
where $\exp(\nu) = \sum_{n=0}^{\infty} \frac{1}{n!} \nu^{*n}$. This expansion follows also from Theorem \ref{thm:thm2}. Applying this result to $f=\g$, we extend \cite[Theorem 1.2]{MR3641640}, which holds for compactly supported probabilistic measures with radial density, to general finite measures.
\end{example}

\subsection{Heat content for the fractional Laplacian on $\Rd$}

Let $(p_t)_{t\geq 0}$ be the $\al$-stable semigroup in $\Rd$, $\al \in (0,2)$.  We recall that in this case $\psi(\xi) = |\xi|^{\al}$ and the corresponding L\'{e}vy measure $\nu$ is the $\al$-stable L\'{e}vy measure $\nu^{(\al)}$.
The related function $h$ turns into $h(r) = c/r^{\al}$, for some $c>0$.

Let 
$$
a_n := \frac{1}{\pi^{1+d/2}}\frac{(-1)^{n-1}}{n!} 2^{n\al} \Gamma\left(\frac{n\al}{2}+1\right) \Gamma\Big(\frac{n\al+d}{2}\Big)  \sin\left(\frac{\pi n \al}{2}\right). 
$$
For $\frac{n\al}{2} \notin \mathbb{N}$,
$$
a_n = \frac{(-1)^{n-1}}{n!} \mathcal{A}_{d,-n\al}.
$$
By Hiraba \cite[Remark 2.b)]{MR1287843}, for $\al<1$ and $x \in \Rd\setminus\{0\}$,
\begin{equation}\label{eq:Hiraba}
p_1(x) = \sum_{n=1}^{\infty} a_n |x|^{-n\al-d}. 
\end{equation}

Our following two results extend \cite[Theorem 1.2]{MR3606559}. Note that they provide more detailed expansion than the one resulting from Corollary \ref{thm:cor2}, compare with Example \ref{ex_stable}.

\begin{theorem}\label{thm3}
Let $\al \in (0,1)$ be such that $1/\al \notin \mathbb{N}$ and let $\Omega \subset \Rd$ be an open set of finite Lebesgue measure and perimeter. Then 
\begin{align*}
\lim_{t \to 0^+} t^{-1/\al}& \left( H(t) - \sum_{n=1}^{\left[\frac{1}{\al}\right]} \frac{(-1)^{n-1}}{n!} t^n \Per_{\nu^{(n\al)}} (\Omega) \right)\\
& \qquad\qquad\qquad=\frac{\pi^{\frac{d-1}{2}}}{\Gamma\left(\frac{d+1}{2}\right)} \Per(\Omega)  \left(\int_0^1 r^d p_1(re_d) \, \ud r -\sum_{n=1}^{\infty} \frac{a_n}{1-n\al} \right).
\end{align*}
\end{theorem}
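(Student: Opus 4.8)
The plan is to reduce the statement to a single one-dimensional limit that can be handled by dominated convergence, after absorbing the perimeter terms into a renormalization. Set $N:=[1/\al]$, so that $N\al<1<(N+1)\al$ because $1/\al\notin\N$, and write
$$
A(s):=\int_{\mathbb{S}^{d-1}}\big(\g(0)-\g(su)\big)\,\sigma(\ud u),\qquad s>0.
$$
By \eqref{g_Omega_bound}, $0\le A(s)\le C\,\sigma(\mathbb{S}^{d-1})\,(1\wedge s)$, and by \eqref{prop_v} together with dominated convergence on the sphere (the integrand being bounded by $C$ and convergent pointwise) one gets $\lim_{s\to0^+}A(s)/s=\frac{\pi^{(d-1)/2}}{\Gamma((d+1)/2)}\Per(\Omega)=:\mathcal{P}$. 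Using the self-similarity $p_t(\ud x)=t^{-d/\al}p_1(t^{-1/\al}x)\,\ud x$ in \eqref{H_formula}, substituting $y=t^{1/\al}z$, passing to polar coordinates, using radiality of $p_1$ and Tonelli's theorem, I would first record that
$$
H(t)=\int_0^\infty A(t^{1/\al}r)\,p_1(re_d)\,r^{d-1}\,\ud r .
$$

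Next I would rewrite the perimeter terms in the same shape. For $1\le n\le N$ one has $n\al<1$, hence $n\al/2\in(0,1)$ and $a_n=\frac{(-1)^{n-1}}{n!}\mathcal{A}_{d,-n\al}$; moreover \eqref{perX} and \eqref{g_Omega_bound} give $\Per_{\nu^{(n\al)}}(\Omega)=\mathcal{A}_{d,-n\al}\int_0^\infty A(s)\,s^{-1-n\al}\,\ud s$, the integral being finite exactly because $n\al<1$. The change of variables $s=t^{1/\al}r$ turns $t^{n}\int_0^\infty A(s)s^{-1-n\al}\,\ud s$ into $\int_0^\infty A(t^{1/\al}r)r^{-1-n\al}\,\ud r$, so $\frac{(-1)^{n-1}}{n!}t^{n}\Per_{\nu^{(n\al)}}(\Omega)=a_n\int_0^\infty A(t^{1/\al}r)r^{-1-n\al}\,\ud r$. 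Since the $N+1$ integrals involved are absolutely convergent, subtraction yields the key identity
$$
H(t)-\sum_{n=1}^{N}\frac{(-1)^{n-1}}{n!}t^{n}\Per_{\nu^{(n\al)}}(\Omega)=\int_0^\infty A(t^{1/\al}r)\,\rho_N(r)\,\ud r,\qquad \rho_N(r):=p_1(re_d)r^{d-1}-\sum_{n=1}^{N}a_n r^{-1-n\al},
$$
where, by Hiraba's expansion \eqref{eq:Hiraba}, $\rho_N(r)=\sum_{n>N}a_n r^{-1-n\al}$ for all $r>0$.

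Then I would divide by $t^{1/\al}$ and pass to the limit. For fixed $r>0$, $t^{-1/\al}A(t^{1/\al}r)=r\cdot A(t^{1/\al}r)/(t^{1/\al}r)\to\mathcal{P}r$, while the bound on $A$ gives $t^{-1/\al}A(t^{1/\al}r)\le C\,\sigma(\mathbb{S}^{d-1})\,(1\wedge t^{1/\al}r)/t^{1/\al}\le C\,\sigma(\mathbb{S}^{d-1})\,r$. The function $r\mapsto r|\rho_N(r)|$ is integrable on $(0,\infty)$: near $0$ it is bounded by $p_1(re_d)r^{d}+\sum_{n\le N}|a_n|r^{-n\al}$, integrable because $n\al<1$; near $\infty$, using $\rho_N(r)=\sum_{n>N}a_n r^{-1-n\al}$ and $\sum_n|a_n|<\infty$ (which follows from the explicit formula for $a_n$ and Stirling's formula, the condition $\al<1$ forcing super-exponential decay), it is $O(r^{-(N+1)\al})$ with $(N+1)\al>1$. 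Dominated convergence then gives
$$
\lim_{t\to0^+}t^{-1/\al}\Big(H(t)-\sum_{n=1}^{N}\tfrac{(-1)^{n-1}}{n!}t^{n}\Per_{\nu^{(n\al)}}(\Omega)\Big)=\mathcal{P}\int_0^\infty r\,\rho_N(r)\,\ud r .
$$
Finally I would identify the constant: $\int_0^\infty r\rho_N(r)\,\ud r=\int_0^\infty\big(p_1(re_d)r^{d}-\sum_{n\le N}a_n r^{-n\al}\big)\,\ud r$; splitting at $r=1$, the finite sum integrates term-by-term on $(0,1)$ to $\int_0^1 r^{d}p_1(re_d)\,\ud r-\sum_{n=1}^{N}\frac{a_n}{1-n\al}$, while on $(1,\infty)$ the integrand equals $\sum_{n>N}a_n r^{-n\al}$ with $\sum_{n>N}|a_n|r^{-n\al}\le r^{-(N+1)\al}\sum_n|a_n|\in L^1(1,\infty)$, so Fubini gives $-\sum_{n>N}\frac{a_n}{1-n\al}$; adding the two pieces produces exactly $\int_0^1 r^{d}p_1(re_d)\,\ud r-\sum_{n=1}^{\infty}\frac{a_n}{1-n\al}$, which is the asserted value.

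The step I expect to be the main obstacle is the bookkeeping near the borderline exponent $1/\al$. The correction $\rho_N$ grows super-polynomially as $r\to0^+$ and is rendered integrable only by the vanishing factor $A(t^{1/\al}r)=O(t^{1/\al}r)$, which works precisely because $N\al<1$; at infinity the heavy tail of $p_1$ makes even $\int_0^\infty r^{d}p_1(re_d)\,\ud r$ divergent, so the limit exists only after subtracting the first $N$ perimeter terms, the remainder being controlled through Hiraba's series. Establishing the summability and decay of the coefficients $a_n$ needed to justify the interchanges above, and the precise linear behaviour of $\g$ at the origin encapsulated in \eqref{g_Omega_bound}–\eqref{prop_v}, are the technical ingredients that make the argument go through.
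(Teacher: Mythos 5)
Your proof is correct, and it reaches the same limit by the same underlying mechanism as the paper (subtract the partial Hiraba expansion \eqref{eq:Hiraba}, exploit the linear behaviour of $\g$ at the origin via \eqref{prop_iv}--\eqref{prop_v}, and pass to the limit by dominated convergence), but the decomposition is genuinely different. The paper normalises $\mathrm{diam}(\Omega)=1$ and splits the space integral into the three regions $|x|\le 1$, $1<|x|\le t^{-1/\al}$ and $|x|>t^{-1/\al}$, treating each with its own estimate: on the inner ball it keeps the integrand in the form $p_1-\text{partial sum}$ and applies dominated convergence; on the outer region it uses that $\g(t^{1/\al}x)=0$ there to reduce to an explicit series which is $O(t^{\lceil 1/\al\rceil})$; on the middle annulus it switches to the Hiraba tail and applies a second dominated-convergence argument term by term. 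You instead fold everything into the single radial identity $H(t)-\sum_{n\le N}\frac{(-1)^{n-1}}{n!}t^n\Per_{\nu^{(n\al)}}(\Omega)=\int_0^\infty A(t^{1/\al}r)\rho_N(r)\,\ud r$ and run one dominated-convergence argument with the majorant $Cr|\rho_N(r)|$, whose integrability you verify by using the defining expression of $\rho_N$ near $0$ and the tail series near $\infty$. This buys a shorter argument with no diameter normalisation and no case-by-case bookkeeping at the moving cutoff $t^{-1/\al}$; what it requires in exchange is exactly the two ingredients you flag, namely $\sum_n|a_n|<\infty$ (which follows from Stirling since $\al<1$, and is in any case implicitly needed in the paper's treatment of $\mathrm{I}_2$ and $\mathrm{I}_3$) and the absolute convergence of each $\Per_{\nu^{(n\al)}}(\Omega)$ for $n\al<1$, which \eqref{perX} together with \eqref{g_Omega_bound} provides. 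The identification of the limiting constant by splitting $\int_0^\infty r\rho_N(r)\,\ud r$ at $r=1$ reproduces the paper's two contributions (the $\mathrm{I}_1$ term and the $\mathrm{I}_2$ term) in one stroke.
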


\begin{proof}
Without loss of generality, we can assume $\mathrm{diam}(\Omega)=1$. For $1/\al \notin \N$, $\left[1/\al\right] = \lceil1/\al\rceil -1$, and we will use this formula in order to avoid repeating similar calculations in the next proof. By \eqref{H_formula}, \eqref{perX} and the scaling property of $p_t$,
\begin{align*}
H(t) - \sum_{n=1}^{\lceil\frac{1}{\al}\rceil -1} &\frac{(-1)^{n-1}}{n!} t^n \Per_{\nu^{(n\al)}} (\Omega) \\
&= \int_{\Rd} \left(\g(0)-\g(x)\right) \left(p_t(x) - \sum_{n=1}^{\lceil\frac{1}{\al}\rceil -1} \frac{(-1)^{n-1}}{n!} t^n \mathcal{A}_{d,-n\al} |x|^{-d-n\al} \right) \ud x \\
&=\int_{\Rd} \left(\g(0)-\g(t^{1/\al}x)\right) \left(p_1(x) - \sum_{n=1}^{\lceil\frac{1}{\al}\rceil -1} \frac{(-1)^{n-1}}{n!} \mathcal{A}_{d,-n\al} |x|^{-d-n\al} \right)  \ud x.
\end{align*}
We split the above integral into
\begin{equation}\label{int_split}
\int_{|x| \leq 1} +\int_{1 <|x| \leq t^{-1/\al}} + \int_{|x| > t^{-1/\al}} =: \mathrm{I}_1+\mathrm{I}_2+\mathrm{I}_3.
\end{equation}
We have
\begin{align*}
\mathrm{I}_1 &= \int_{|x| \leq 1} \left(\g(0)-\g(t^{1/\al}x)\right) \Bigg(p_1(x) - \sum_{n=1}^{\lceil\frac{1}{\al}\rceil -1} \frac{(-1)^{n-1}}{n!} \mathcal{A}_{d,-n\al} |x|^{-d-n\al} \Bigg)  \ud x \\
&= t^{1/\al}\int_{0}^{1} \int_{\mathbb{S}^{d-1}} r^d \, \frac{\g(0)-\g(t^{1/\al}r u)}{t^{1/\al}r} \Bigg(p_1(re_d) - \sum_{n=1}^{\lceil\frac{1}{\al}\rceil -1} \frac{(-1)^{n-1}}{n!} \mathcal{A}_{d,-n\al} r^{-d-n\al} \Bigg)  \sigma(\ud u) \, \ud r.
\end{align*}
By \eqref{prop_iv}, \eqref{prop_v} and Dominated Convergence Theorem,			
$$
\lim_{t \to 0^+} t^{-1/\al} \mathrm{I}_1 = \frac{\pi^{\frac{d-1}{2}}}{\Gamma\left(\frac{d+1}{2}\right)} \Per(\Omega) \left( \int_0^1 r^d p_1(re_d) \ud r - \sum_{k=1}^{\lceil\frac{1}{\al}\rceil -1} \frac{(-1)^{n-1}}{n!} \frac{\mathcal{A}_{d,-n\al}}{1-n\al}  \right) .
$$
Next,
\begin{align*}
\frac{\mathrm{I}_3}{\g(0)} &=   \int_{|x| > t^{-1/\al}} \left(p_1(x) - \sum_{n=1}^{\lceil\frac{1}{\al}\rceil -1} \frac{(-1)^{n-1}}{n!} \mathcal{A}_{d,-n\al} |x|^{-d-n\al} \right) \ud x \\
& = \int_{|x| > t^{-1/\al}} \sum_{n=\lceil\frac{1}{\al}\rceil}^{\infty} a_n |x|^{-n\al-d} \, \ud x\\
&= \sum_{n=\lceil\frac{1}{\al}\rceil}^{\infty} a_n \int_{|x| > t^{-1/\al}}|x|^{-n\al-d} \, \ud x 
=  \omega_{d-1} \sum_{n=\lceil\frac{1}{\al}\rceil}^{\infty} \frac{a_n}{n\al}  t^{n}.
\end{align*}
We have
$$
|\mathrm{I}_3| \leq \g(0)\omega_{d-1} \sum_{n=\lceil\frac{1}{\al}\rceil}^{\infty} \frac{|a_n|}{n\al}  t^{n}=  O(t^{\lceil\frac{1}{\al}\rceil})
$$
for $t<1$,
thus
$$
\lim_{t\to 0^+} t^{-1/\al} \mathrm{I}_3 = 0.
$$
We have
\begin{align*}
\mathrm{I}_2 &= \int_{1< |x| < t^{-1/\alpha}} \left(\g(0)-\g(t^{1/\al}x)\right) \Bigg(p_1(x) - \sum_{n=1}^{\lceil\frac{1}{\al}\rceil -1} \frac{(-1)^{n-1}}{n!} \mathcal{A}_{d,-n\al} |x|^{-d-n\al} \Bigg)  \ud x \\
&= t^{1/\al}\int_{1}^{t^{-1/\alpha}} \int_{\mathbb{S}^{d-1}} r^d \, \frac{\g(0)-\g(t^{1/\al}r u)}{t^{1/\al}r} \Bigg(p_1(re_d) - \sum_{n=1}^{\lceil\frac{1}{\al}\rceil -1} \frac{(-1)^{n-1}}{n!} \mathcal{A}_{d,-n\al} r^{-d-n\al} \Bigg)  \sigma(\ud u) \, \ud r \\
&= t^{1/\al}\int_{1}^{t^{-1/\alpha}} \int_{\mathbb{S}^{d-1}} \, \frac{\g(0)-\g(t^{1/\al}r u)}{t^{1/\al}r} \sum_{n=\lceil\frac{1}{\al}\rceil}^{\infty} a_n \, \sigma(\ud u) \, r^{-n\al}\, \ud r.
\end{align*}
By \eqref{prop_iv}, 
\begin{align*}
|\mathrm{I}_2| &\leq  \frac{\Per(\Omega)}{2} t^{1/\al} \sum_{n=\lceil\frac{1}{\al}\rceil}^{\infty} |a_n| \int_{1<|x|\leq t^{-1/\al}} |x|^{1-n\al-d} \, \ud x \\
&\leq \frac{\Per(\Omega)}{2}  t^{1/\al} \sum_{n=\lceil\frac{1}{\al}\rceil}^{\infty} |a_n| \int_{|x|>1} |x|^{1-n\al-d} \, \ud x \\
&=\frac{\Per(\Omega)}{2}  \omega_{d-1} t^{1/\al} \sum_{n=\lceil\frac{1}{\al}\rceil}^{\infty} \frac{|a_n|}{n\al-1},
\end{align*}
hence
$$
\mathrm{I}_2 = \sum_{n=\lceil\frac{1}{\al}\rceil}^{\infty} a_n \int_{1 <|x| \leq t^{-1/\al}} \left(\g(0)-\g(t^{1/\al}x)\right)  |x|^{-n\al-d} \, \ud x
$$
and
$$
\lim_{t \to 0^+} t^{-1/\al} \mathrm{I}_2 = \sum_{n=\lceil\frac{1}{\al}\rceil}^{\infty} a_n \lim_{t \to 0^+} t^{-1/\al} \int_{1 <|x| \leq t^{-1/\al}} \left(\g(0)-\g(t^{1/\al}x)\right)  |x|^{-n\al-d} \, \ud x.
$$
We get
\begin{align*}
t^{-1/\al} \mathrm{I}_2= \sum_{n=\lceil\frac{1}{\al}\rceil}^{\infty} a_n \int_1^{t^{-1/\al} }  \calM_{\Omega}(t,r) r^{-n\al}\, \ud r, 
\end{align*}
where 
\begin{align}\label{M_om}
\calM_{\Omega}(t,r) = \int_{\mathbb{S}^{d-1}} \frac{g_\Omega (0) - g_\Omega \left(rt^{1/\al}u\right)}{rt^{1/\al}}\,
\sigma (\ud u).
\end{align}
We claim that 
\begin{align}\label{claim-new}
\lim_{t \to 0^+} \int_1^{t^{-1/\al}} \calM_{\Omega}(t,r) r^{-n\al}\, \ud r =  \frac{\pi^{(d-1)/2}}{\Gamma\left((d+1)/2\right)}\Per (\Omega) \frac{1}{n\al-1}.
\end{align}
Indeed, by \eqref{prop_iv} and \eqref{prop_v},
\begin{align}\label{M_om1}
0\leq \calM_{\Omega}(t,r)\leq \frac{1}{2} \Per(\Omega)\, \sigma (\mathbb{S}^{d-1}) 
\end{align}
and, for any $r>0$,
\begin{align}\label{M_om2}
\lim_{t\to 0^+}\calM_{\Omega}(t,r) = \frac{\pi^{(d-1)/2}}{\Gamma\left((d+1)/2\right)}\Per (\Omega).
\end{align}
Moreover,
$$
\int_1^{t^{-1/\al}} r^{-n\al}\, \ud r \leq \int_1^{\infty} r^{-n\al}\, \ud r = \frac{1}{n\al-1}
$$
and hence \eqref{claim-new} follows by Dominated Convergence Theorem.
\end{proof}

\begin{theorem}\label{thm4}
Let $\al \in (0, 1)$ be such that $1/\al \in \mathbb{N}$ and let $\Omega \subset \Rd$ be an open set of finite Lebesgue measure and perimeter.
Then 
\begin{align*}
\lim_{t \to 0^+} (t^{1/\al}\log(1/t))^{-1}  \left( H(t) - \sum_{n=1}^{1/\al-1} \frac{(-1)^{n-1}}{n!} t^n \Per_{\nu^{(n\al)}} (\Omega) \right)
& = \frac{(-1)^{1/\alpha-1}}{(1/\alpha-1)!\pi}\Per(\Omega).
\end{align*}
\end{theorem}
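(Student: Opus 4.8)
The plan is to repeat the proof of Theorem~\ref{thm3} almost verbatim, the only new phenomenon being that the hypothesis $1/\al\in\N$ makes exactly one of the terms ``resonant''. Put $m:=1/\al\in\{2,3,\dots\}$. Since $1/\al\in\N$ we have $\lceil1/\al\rceil-1=m-1$, so, as in the proof of Theorem~\ref{thm3} we may normalise $\mathrm{diam}(\Omega)=1$, and then formulas \eqref{H_formula}, \eqref{perX} together with the scaling of $p_t$ give the same identity as there,
\begin{align*}
H(t)-\sum_{n=1}^{m-1}\frac{(-1)^{n-1}}{n!}t^n\Per_{\nu^{(n\al)}}(\Omega)=\int_{\Rd}\bigl(\g(0)-\g(t^{1/\al}x)\bigr)\Bigl(p_1(x)-\sum_{n=1}^{m-1}\frac{(-1)^{n-1}}{n!}\mathcal{A}_{d,-n\al}|x|^{-d-n\al}\Bigr)\ud x,
\end{align*}
which I would split as $\mathrm{I}_1+\mathrm{I}_2+\mathrm{I}_3$ over $\{|x|\le1\}$, $\{1<|x|\le t^{-1/\al}\}$ and $\{|x|>t^{-1/\al}\}$ as in \eqref{int_split}. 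Since $m\al=1$ and $\tfrac{n\al}{2}\notin\N$ for $1\le n\le m-1$, one has $a_n=\frac{(-1)^{n-1}}{n!}\mathcal{A}_{d,-n\al}$ on that range, so the subtracted sum cancels the first $m-1$ summands of Hiraba's expansion \eqref{eq:Hiraba}, leaving $\sum_{n\ge m}a_n|x|^{-n\al-d}$. The estimates of the proof of Theorem~\ref{thm3} then apply verbatim to $\mathrm{I}_1$ and $\mathrm{I}_3$ (for $\mathrm{I}_1$ the integrand is dominated on $(0,1)$ because $p_1$ is bounded and $r^{-n\al}\in L^1(0,1)$ for $n\le m-1$; for $\mathrm{I}_3$ one uses $\g(t^{1/\al}x)=0$ when $|x|>t^{-1/\al}$, giving $|\mathrm{I}_3|\le\g(0)\,\omega_{d-1}\sum_{n\ge m}\frac{|a_n|}{n\al}t^n=O(t^{1/\al})$), whence $t^{-1/\al}\mathrm{I}_1=O(1)$ and $t^{-1/\al}\mathrm{I}_3=O(1)$; thus $(t^{1/\al}\log(1/t))^{-1}(\mathrm{I}_1+\mathrm{I}_3)\to0$ and everything reduces to $\mathrm{I}_2$.

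For $\mathrm{I}_2$, Fubini's theorem (legitimate for each fixed $t$) again gives $t^{-1/\al}\mathrm{I}_2=\sum_{n\ge m}a_n\int_1^{t^{-1/\al}}\calM_{\Omega}(t,r)r^{-n\al}\ud r$, with $\calM_\Omega$ as in \eqref{M_om}. For $n\ge m+1$ one has $n\al>1$, and the argument behind \eqref{claim-new} shows $\int_1^{t^{-1/\al}}\calM_\Omega(t,r)r^{-n\al}\ud r\le\frac{\Per(\Omega)\sigma(\mathbb{S}^{d-1})}{2(n\al-1)}$, so $\sum_{n\ge m+1}a_n\int_1^{t^{-1/\al}}\calM_\Omega(t,r)r^{-n\al}\ud r=O(1)$ and is absorbed after dividing by $\log(1/t)$. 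The term $n=m$ is the resonant one: now $m\al=1$, so $\int_1^{t^{-1/\al}}r^{-1}\ud r=\tfrac1\al\log(1/t)\to\infty$ and \eqref{claim-new} no longer applies. Here I would use the substitution $s=rt^{1/\al}$, which removes the $t$-dependence from the inside of $\calM_\Omega$: writing $\calM_\Omega(t,r)=\Phi(rt^{1/\al})$ with $\Phi(s):=\int_{\mathbb{S}^{d-1}}\frac{\g(0)-\g(su)}{s}\sigma(\ud u)$, one gets $\int_1^{t^{-1/\al}}\calM_\Omega(t,r)r^{-1}\ud r=\int_{t^{1/\al}}^{1}\Phi(s)s^{-1}\ud s$. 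By \eqref{M_om1}--\eqref{M_om2}, $\Phi$ is bounded on $(0,1]$ and $\Phi(s)\to L:=\frac{\pi^{(d-1)/2}}{\Gamma((d+1)/2)}\Per(\Omega)$ as $s\to0^+$; since $\int_{t^{1/\al}}^{1}s^{-1}\ud s=\tfrac1\al\log(1/t)\to\infty$, a Ces\`{a}ro-type estimate (split the integral at a small $\delta$, bound the part over $[\delta,1]$ by a constant, and on $(0,\delta)$ use $|\Phi(s)-L|<\eps$) gives $(\log(1/t))^{-1}\int_1^{t^{-1/\al}}\calM_\Omega(t,r)r^{-1}\ud r\to L/\al$.

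Collecting the three pieces, $(t^{1/\al}\log(1/t))^{-1}\bigl(H(t)-\sum_{n=1}^{1/\al-1}\frac{(-1)^{n-1}}{n!}t^n\Per_{\nu^{(n\al)}}(\Omega)\bigr)\to a_m\,L/\al=a_m\,mL$, so it only remains to evaluate $a_m$. Since $m\al=1$, $2^{m\al}\Gamma(\tfrac{m\al}{2}+1)\Gamma(\tfrac{m\al+d}{2})\sin(\tfrac{\pi m\al}{2})=2\,\Gamma(\tfrac32)\Gamma(\tfrac{d+1}{2})=\sqrt\pi\,\Gamma(\tfrac{d+1}{2})$, hence $a_m=\frac{(-1)^{m-1}}{m!}\frac{\Gamma((d+1)/2)}{\pi^{(d+1)/2}}$, and therefore $a_m\,mL=\frac{(-1)^{m-1}}{(m-1)!\,\pi}\Per(\Omega)=\frac{(-1)^{1/\al-1}}{(1/\al-1)!\,\pi}\Per(\Omega)$, which is the asserted limit. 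I expect the resonant term to be the only genuine obstacle: in contrast to Theorem~\ref{thm3}, where $\int_1^{t^{-1/\al}}\calM_\Omega(t,r)r^{-n\al}\ud r$ converges for every relevant $n$, here it diverges logarithmically for $n=m$, and pinning down its precise $\tfrac1\al\log(1/t)$ rate requires the change of variables $s=rt^{1/\al}$ together with the boundary value $\Phi(0^+)=\frac{\pi^{(d-1)/2}}{\Gamma((d+1)/2)}\Per(\Omega)$ supplied by \eqref{prop_v}; the rest (that $\mathrm{I}_1$, $\mathrm{I}_3$ and the $n\ge m+1$ part of $\mathrm{I}_2$ are $O(t^{1/\al})$) is routine bookkeeping carried over from the proof of Theorem~\ref{thm3}.
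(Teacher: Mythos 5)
Your proposal is correct and follows essentially the same route as the paper: the same decomposition $\mathrm{I}_1+\mathrm{I}_2+\mathrm{I}_3$, the same identification of the resonant $n=1/\al$ term in $\mathrm{I}_2$, and the same bookkeeping showing the remaining pieces are $o(t^{1/\al}\log(1/t))$. The only cosmetic difference is in extracting the logarithmic rate of the resonant term: you substitute $s=rt^{1/\al}$ and run a Ces\`aro-type average of $\Phi(s)$ against $s^{-1}\,\ud s$, whereas the paper substitutes $r=t^{-\rho}$ to get $\int_0^{1/\al}\calM_{\Omega}(t,t^{-\rho})\,\ud\rho$ and applies dominated convergence --- both yield the same limit $L/\al$, and your explicit evaluation of $a_{1/\al}$ matches the stated constant.
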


\begin{proof}
By the Proof of Theorem \ref{thm3},
\begin{align*}
H(t) - \sum_{n=1}^{1/\al-1} &\frac{(-1)^{n-1}}{n!} t^n \Per_{\nu^{(n\al)}} (\Omega) = \mathrm{I}_1+\mathrm{I}_2+\mathrm{I}_3,
\end{align*}
where
\begin{align*}
\mathrm{I}_1 = t^{1/\al}\int_{0}^{1} \int_{\mathbb{S}^{d-1}} r^d \, \frac{\g(0)-\g(t^{1/\al}r u)}{t^{1/\al}r} \left(p_1(re_d) - \sum_{n=1}^{1/\al-1} \frac{(-1)^{n-1}}{n!} \mathcal{A}_{d,-n\al} r^{-d-n\al} \right)  \sigma(\ud u) \, \ud r,
\end{align*}
\begin{align*}
\mathrm{I}_2&= \int_{1 <|x| \leq t^{-1/\al}} \left(\g(0)-\g(t^{1/\al}x)\right) \sum_{n=1/\al}^{\infty} a_n |x|^{-n\al-d} \, \ud x,
\end{align*}
and
\begin{align*}
\mathrm{I}_3  = |g_{\Omega}(0)| \omega_{d-1} \sum_{n=1/\al}^{\infty} \frac{a_n}{n\al} t^{n}.
\end{align*}
By \eqref{prop_iv}, \eqref{prop_v} and  Dominated Convergence Theorem,			
$$
\lim_{t \to 0^+} \left(t^{1/\al} \log(1/t)\right)^{-1} \mathrm{I}_1 = 0.
$$
Next,
$$
\lim_{t\to 0^+} (t^{1/\al} \log(1/t))^{-1} \mathrm{I}_3 = 0.
$$

By \eqref{prop_iv}, 
\begin{align*}
|\mathrm{I}_2| &\leq  \frac{\Per(\Omega)}{2} t^{1/\al} \sum_{n=1/\al}^{\infty} |a_n| \int_{1<|x|\leq t^{-1/\al}} |x|^{1-n\al-d} \, \ud x \\
&= \frac{\Per(\Omega)}{2}   \left(\sum_{n=1/\al+1}^{\infty} |a_n| t^{1/\al} \frac{t^{n-1/\al}-1}{1-n\al} + \frac{a_{1/\al}}{\al} t^{1/\al} \log(1/t)\right) \\
&\leq \frac{\Per(\Omega)}{2}   \left(\sum_{n=1/\al+1}^{\infty} |a_n|  \frac{t^{1/\al}}{n\al-1} +  \frac{a_{1/\al}}{\alpha} t^{1/\al} \log(1/t)\right).
\end{align*}
Therefore
$$
\mathrm{I}_2 = \sum_{n=1/\al}^{\infty} a_n \int_{1 <|x| \leq t^{-1/\al}} \left(\g(0)-\g(t^{1/\al}x)\right)  |x|^{-n\al-d} \, \ud x.
$$
We have
\begin{align*}
&(t^{1/\al} \log(1/t))^{-1} \int_{1 <|x| \leq t^{-1/\al}} \left(\g(0)-\g(t^{1/\al}x)\right)  |x|^{-n\al-d} \, \ud x \\
&= \log(1/t)^{-1}\int_1^{t^{-1/\al}} \int_{\mathbb{S}^{d-1}} \frac{\g(0)-\g(t^{1/\al}ru)}{t^{1/\al} r} \sigma(\ud u) r^{-n\al} \ud r \\
&= \log(1/t)^{-1}\int_1^{t^{-1/\al}} \calM_{\Omega}(t,r) r^{-n\al} \ud r.
\end{align*}
We claim that 
\begin{align}\label{eq:lim_log1}
\lim_{t\to 0^+} \, \log(1/t)^{-1} \int_1^{t^{-1/\al} }\!\!
\calM_{\Omega}(t,r) r^{-1}  \, \ud r= \frac{\pi^{\frac{d-1}{2}}}{\al \Gamma\left(\frac{d+1}{2}\right)} \Per(\Omega),
\end{align}
where 
$$
\calM_{\Omega} (t,r) = \int_{\mathbb{S}^{d-1}} \frac{\g(0)-\g(rt^{1/\al}u)}{rt^{1/\al}} \sigma (\ud u). 
$$
Indeed,
by substitution, 
\begin{align*}
\log(1/t)^{-1} \int_1^{t^{-1/\al}}  \calM_{\Omega} (t,r)r^{-1} \, \ud r &= \log(1/t)^{-1} \int_0^{ \log(1/t)/\al}  \calM_{\Omega} (t,e^r) \, \ud r 
\\
&= 
\int_0^{1/\al} \calM_{\Omega} (t,t^{-r}) \, \ud r,
\end{align*}
and by \eqref{prop_iv}, \eqref{prop_v} and Dominated Convergence Theorem,
\begin{align*}
\lim_{t \to 0^+} \int_0^{1/\al} \calM_{\Omega} (t,t^{-r}) \, \ud r
&= \int_0^{1/\al} \int_{\mathbb{S}^{d-1}} \lim_{t \to 0^+} \frac{\g(0)-\g(t^{1/\al-r}u)}{t^{1/\al-r}} \sigma (\ud u) \, \ud r  \\
&= \frac{\pi^{(d-1)/2}}{\al \Gamma((d+1)/2)} \Per(\Omega).
\end{align*}
We claim that for $n \geq 1/\al+1$ we have
\begin{align}\label{eq:lim_log2}
\lim_{t \to 0^+} \log(1/t)^{-1}\int_1^{t^{-1/\al}} \calM_{\Omega}(t,r) r^{-n\al} \ud r  = 0.
\end{align}
Indeed, by \eqref{M_om1}, \eqref{M_om1}, and since
\eqref{prop_iv} and \eqref{prop_v},
\begin{align*}
0\leq \calM_{\Omega}(t,r)\leq \frac{1}{2} \Per(\Omega)\, \sigma (\mathbb{S}^{d-1}) 
\end{align*}
and, for any $r>0$,
\begin{align*}
\lim_{t\to 0^+}\calM_{\Omega}(t,r) = \frac{\pi^{(d-1)/2}}{\Gamma\left((d+1)/2\right)}\Per (\Omega).
\end{align*}
Moreover,
$$
\int_1^{t^{-1/\al}} \log(1/t)^{-1}  r^{-n\al}\, \ud r \leq \int_1^{\infty} r^{-n\al}\, \ud r = \frac{1}{n\al-1}
$$
for $t<1/e$, and hence \eqref{eq:lim_log2} follows by Dominated Convergence Theorem. \eqref{eq:lim_log1} and \eqref{eq:lim_log2} yield the thesis of the theorem.
\end{proof}

\subsection{Heat content for general stable operators on $\R$}

Let $\al \in (0,1) \cup (1,2)$, $\beta \in [-1,1]$ and $\gamma>0$. We consider convolution semigroup $(p_t)_{t\geq0}$ on $\R$ such that 
$$
\psi(\xi) = \gamma|\xi|^{\al} \left(1-i\beta \operatorname{tg}\left(\frac{\pi\al}{2}\right) \sgn(\xi)\right), \quad \xi \in \R \,.
$$ 
The  corresponding L\'evy measure on $\R$ is given by 
$$
\nu(\ud x) = \frac{c_+ \mathds{1}_{x\geq0} + c_- \mathds{1}_{x<0}}{|x|^{1+\al}} \, \ud x,
$$
where
$$
c_+ = -\frac{1+\beta}{2\Gamma(-\al)\cos(\frac{\pi\al}{2})}\quad \mathrm{and} \quad c_- = -\frac{1-\beta}{2\Gamma(-\al)\cos(\frac{\pi\al}{2})}.
$$

Let $\Omega=(a,b) \subset \R$. We have $\g(x) = (b-a-|x|)\mathbf{1}_{[0,b-a)} (|x|)$. For $\alpha \in (0,1)$, $\Per_{\nu}(\Omega) = (c_++c_-) \al^{-1}(1-\al)^{-1} (b-a)^{1-\al}$. 

We can fix the parameter $\gamma$ without loss of generality. Assume that $\gamma= \cos\left(\frac{\pi\beta\al}{2}\right)$ if $\al<1$ and $\gamma= \cos\left(\pi\beta\frac{2-\al}{2}\right)$ if $\al>1$.

Let 
$$
b_n := \frac{(-1)^{n-1}}{\pi^{3/2}}\frac{\Gamma(n\al+1)}{n!} \sin\left(\pi n\al \rho\right),
$$
\vspace{0.3em}
where $\rho = \frac{1+\beta}{2}$ if $\al<1$, and $\rho = \frac{1-\beta(2-\al)/\al}{2}$ if $\al>1$.
By \cite[(2.5.1)]{MR854867}, for $\al < 1$,
\begin{equation}\label{eq:zolot1}
p_1(x) = \sum_{n=1}^{\infty} b_n x^{-n\al-1}
\end{equation}
as $x \to \infty$. By \cite[(2.5.4)]{MR854867}, for $\al>1$ and $\beta\neq-1$, and any $N \in \mathbb{N}$,
\begin{equation}\label{eq:zolot2}
p_1(x) = \sum_{n=1}^{N} b_n x^{-n\al-1} + O (x^{-(N+1)\al-1})
\end{equation}
as $x \to \infty$. 
Let 
$$
d_n := \frac{ (-1)^{n-1}}{\pi^{3/2}}
\frac{2\Gamma(n\al+1)}{n!} \sin\left(\frac{\pi n\al}{2}\right) \cos\left(\frac{\pi n \al \beta}{2}\right).
$$
This constant will appear in the following proposition, which complements \cite[Theorem 1.1]{MR3606559}. We generalize the results for $\al<1$ to the non-symmetric case. The last result is new even for the symmetric case, since previously only the first two terms of the expansion were known.

\begin{proposition} 
 Let $\Omega=(a,b), |\Omega| = b-a$. 
\begin{enumerate}
\item Let $0<\al<1$ and $0<t<\min \{|\Omega|^{\alpha}, e^{-1} \}$.
\begin{enumerate}
\item If $1/\al \notin \mathbb{N}$, then there is a constant $C_{\alpha}$ independent of $\Omega$ such that
\begin{align*}
H(t) &=\frac{2}{\pi} \sum_{n=1}^{\left[\frac{1}{\alpha}\right]}(-1)^{n-1} \frac{\Gamma(n\alpha)}{(1-n\alpha)n!}
\sin\left(\frac{\pi n \al}{2}\right) \cos\left(\frac{\pi n \alpha \beta}{2}\right) |\Omega|^{1-n\alpha} t^{n}+\,\,C_{\alpha}\,t^{1/\alpha} +R_{\alpha}(t),
\end{align*}  
where
$C_{\alpha} = \int_0^1 \int_{w}^{\infty} \left(p_1(x)+p_1(-x)\right) \ud x \ud w - \sum_{n=1}^{\infty} \frac{d_n}{n\al(1-n\al)}$ and $|R_{\alpha}(t)| \leq c \,t^{\left[\frac{1}{\alpha} \right]+1}$.
\item If $\alpha=1/N$ for some $N \in \mathbb{N}$, then there is a constant $C_{N}(\Omega)$ such that
\begin{align*}
H(t)&= \frac{2}{\pi} \sum_{n=1}^{N-1} (-1)^{n-1}
\frac{\Gamma(n/N)}{(1-n/N)n!}
\sin\left( \frac{\pi n}{2N}\right) \cos\left(\frac{\pi n \beta}{2N}\right)
|\Omega|^{1-n/N} t^{n}
\\&\quad+(-1)^{N-1}\,\frac{2}{\pi (N-1)!}\,
 \cos \left(\frac{\pi \beta}{2}\right)
 t^{N} \ln\left(\frac{1}{t}\right)  + C_{N}(\Omega)\,t^{N}
+R_{1/N}(t),
\end{align*}
where  $C_N(\Omega) =  \int_0^1 \int_{w}^{\infty} \left(p_1(x)+p_1(-x)\right) \ud x \ud w\, + \,  d_N \ln(|\Omega|) - \sum_{n\neq N} \frac{d_n}{n/N(1-n/N)}$, $|R_{1/N}(t)|\leq c\, t^{N+1}$.
\end{enumerate}
\item If $1<\al<2$, $|\beta| \neq 1$, 
then, for any $N \in \mathbb{N}$,
\begin{align*}
H(t) &= t^{1/\al} \int_{\R} |x|p_1(x) \ud x + \frac{2}{\pi} \sum_{n=1}^{N} (-1)^{n-1} \frac{\Gamma(n\al+1)}{n!} \sin\left(\frac{\pi n \al}{2}\right) \cos\left(\frac{\pi n 
\left(\beta\frac{2-\al}{\al}\right)}{2}\right)
\frac{1}{n\al(1-n\al)} |\Omega|^{1-n\al} t^n \\
&\quad+ R_{N}(t)
\end{align*}
as $t \to 0^+$, where $|R_N(t)| \leq c t^{N+1}$.
\end{enumerate}
\end{proposition}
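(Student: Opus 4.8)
The plan is to carry out, in dimension one and for the possibly non-symmetric stable semigroup, the scheme used to prove Theorems~\ref{thm3} and~\ref{thm4}, with Zolotarev's expansions \eqref{eq:zolot1}--\eqref{eq:zolot2} playing the role of Hiraba's expansion \eqref{eq:Hiraba}. Since $\Omega=(a,b)$ is an interval, one has the explicit formula $\g(0)-\g(x)=\min(|x|,|\Omega|)$, so \eqref{H_formula} together with the scaling $p_t(x)=t^{-1/\al}p_1(t^{-1/\al}x)$ gives
\[
H(t)=\int_{\R}\min\bigl(|x|,|\Omega|\bigr)\,p_t(x)\,\ud x=\int_0^\infty \min\bigl(t^{1/\al}x,|\Omega|\bigr)\bigl(p_1(x)+p_1(-x)\bigr)\,\ud x .
\]
Writing $\Lambda:=|\Omega|t^{-1/\al}$ (so $\Lambda\to\infty$ as $t\to0^+$), the minimum above equals $t^{1/\al}x$ on $(0,\Lambda)$ and $|\Omega|$ on $(\Lambda,\infty)$, and it remains to feed the tail expansion of $p_1(\pm\,\cdot\,)$ into the integral and integrate term by term.

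When $1<\al<2$, the density $p_1$ has a finite first absolute moment, so I would use $\min(t^{1/\al}x,|\Omega|)=t^{1/\al}x-(t^{1/\al}x-|\Omega|)^+$ to obtain
\[
H(t)=t^{1/\al}\int_{\R}|x|\,p_1(x)\,\ud x-\int_\Lambda^\infty\bigl(t^{1/\al}x-|\Omega|\bigr)\bigl(p_1(x)+p_1(-x)\bigr)\,\ud x .
\]
On the remaining domain $x\ge\Lambda\to\infty$, so \eqref{eq:zolot2} applies to $p_1(x)$ and, with $\beta$ replaced by $-\beta$, to $p_1(-x)$; this is where the hypothesis $|\beta|\neq 1$ is used, to guarantee that both one-sided tails are of polynomial type. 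Inserting the $N$-term expansion with remainder $O(x^{-(N+1)\al-1})$ and using the elementary integrals $\int_\Lambda^\infty x^{-n\al}\,\ud x$ and $\int_\Lambda^\infty x^{-n\al-1}\,\ud x$ (both finite, since $n\al>1$), the $n$-th term contributes a constant multiple of $|\Omega|^{1-n\al}t^n$, the remainder integrates to $O(t^{N+1})$, and a short manipulation of $\Gamma$-factors identifies the constants.

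When $0<\al<1$, the first moment diverges, so instead I would split the integral at $x=\Lambda$,
\[
H(t)=t^{1/\al}\int_0^\Lambda x\bigl(p_1(x)+p_1(-x)\bigr)\,\ud x+|\Omega|\int_\Lambda^\infty\bigl(p_1(x)+p_1(-x)\bigr)\,\ud x ,
\]
and split the first integral further at $x=1$ (legitimate once $t<|\Omega|^\al$, i.e.\ $\Lambda>1$): the part over $(0,1)$ is a finite multiple of $t^{1/\al}$, while on $(1,\infty)$ the series \eqref{eq:zolot1} for $p_1(x)$ and $p_1(-x)$ can be truncated after $n=[1/\al]$ terms, with the resulting tail bounded by $c\,x^{-([1/\al]+1)\al-1}$. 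Integrating the $n$-th term over $(1,\Lambda)$ via $\int_1^\Lambda x^{-n\al}\,\ud x=\tfrac{\Lambda^{1-n\al}-1}{1-n\al}$ and over $(\Lambda,\infty)$, then adding the two pieces, produces a constant multiple of $|\Omega|^{1-n\al}t^n$ together with a constant multiple of $t^{1/\al}$; collecting the $(0,1)$-piece, the $t^{1/\al}$-parts of all $n$ and the leftover tail integrals assembles the $C_\al t^{1/\al}$ term, a Fubini rearrangement rewrites $C_\al$ in the announced integral form, and the truncation tail gives $|R_\al(t)|\le c\,t^{[1/\al]+1}$. The case $\al=1/N$ runs verbatim, except that the term $n=N$ now has $n\al=1$, so $\int_1^\Lambda x^{-1}\,\ud x=\log\Lambda=\log|\Omega|+\tfrac{1}{\al}\log(1/t)$; this single change is what produces the $t^N\log(1/t)$ term and the $\log|\Omega|$ inside $C_N(\Omega)$, exactly as Theorem~\ref{thm4} is obtained from Theorem~\ref{thm3}.

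The main obstacle is the remainder bookkeeping rather than any conceptual point. One has to (i) bound the truncation error of the Zolotarev series (resp.\ asymptotic expansion) uniformly over $x\ge 1$ (resp.\ over $x\ge\Lambda$); (ii) control the contribution of $x$ near $0$, where the large-$x$ expansion carries no information about $p_1$; and (iii) verify that the fractional power $t^{1/\al}$ and the integer powers $t^n$ separate cleanly --- in particular that the series defining $C_\al$ (resp.\ $C_N(\Omega)$) converges, which rests on the super-geometric decay of $\Gamma(n\al+1)/n!$ for $\al<1$ --- and that all the scattered $t^{1/\al}$-contributions add up to the single announced constant. Handling the two one-sided tails simultaneously, which is what turns $\sin(\pi n\al\rho)+\sin(\pi n\al(1-\rho))$ into the factor $2\sin(\tfrac{\pi n\al}{2})\cos(\tfrac{\pi n\al(2\rho-1)}{2})$ entering $d_n$, is the only feature genuinely absent from the symmetric case treated previously.
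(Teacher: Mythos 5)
Your proposal matches the paper's proof: for part (ii) the paper uses exactly your decomposition $H(t)=t^{1/\al}\int_{\R}|x|p_1(x)\,\ud x+\int_{|x|\ge|\Omega|t^{-1/\al}}\bigl(|\Omega|-t^{1/\al}|x|\bigr)p_1(x)\,\ud x$ followed by term-by-term integration of \eqref{eq:zolot2} (with $\beta$ replaced by $-\beta$ on the negative half-line), while for part (i) the paper only remarks that the argument is analogous to \cite[Theorem 1.1]{MR3606559} via \eqref{eq:zolot1}, which is precisely the splitting at $x=1$ and $x=|\Omega|t^{-1/\al}$ that you describe. One small caution for (i): the $t^{1/\al}$-coefficient of \emph{every} term $n\ge 1$ of the convergent series must be collected into $C_\al$ (as your phrase ``the $t^{1/\al}$-parts of all $n$'' indicates), because a literal truncation after $[1/\al]$ terms leaves a tail whose integral over $(1,|\Omega|t^{-1/\al})$ against $t^{1/\al}x\,\ud x$ is only $O(t^{1/\al})$ rather than $O(t^{[1/\al]+1})$; only the $|\Omega|^{1-n\al}t^{n}$ contributions with $n>[1/\al]$ belong in $R_\al$.
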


Note that by \cite[Proposition 1.4]{Hardin}
$$
\int_{\R} |x|p_1(x) \ud x = \frac{2}{\pi} \Gamma\left(1-\frac{1}{\al}\right) \mathrm{Re} \left(1+i\beta\operatorname{tg}\left(\frac{\pi \al}{2} \right)\right)^{1/\al}.
$$

\begin{proof}
(i) can proved analogously to \cite[Theorem 1.1]{MR3606559}, using \eqref{eq:zolot1}. We will prove (ii). We have
We have
\begin{align*}
H(t) &= \int_{\R} \left(|\Omega| \wedge |x|\right) p_t(x) \, \ud x = \int_{\R} \left(|\Omega| \wedge t^{1/\al}|x|\right) p_1(x) \, \ud x \\
&= t^{1/\al} \int_{|x|<|\Omega|t^{-1/\al}} |x|p_1(x) \, \ud x +  \int_{|x|\geq |\Omega|t^{-1/\al}} \left(|\Omega|-t^{1/\al}|x|\right) p_1(x) \, \ud x\\
& = t^{1/\al} \, \int_{\R} |x|p_1(x) \ud x +
\int_{|x|\geq |\Omega|t^{-1/\al}} \left(|\Omega|-t^{1/\al}|x|\right) p_1(x) \, \ud x. 
\end{align*}
By \eqref{eq:zolot2}, for any $N \in \mathbb{N}$, 
$$
p_1(x) = \sum_{n=1}^{N} b_n x^{-n\al-1} + O(x^{-(N+1)\al-1}),
$$
as $x \to \infty$.
Therefore
\begin{align*}
&\int_{|\Omega|t^{-1/\al}}^{\infty} \left(|\Omega|-t^{1/\al}x\right) p_1(x) \, \ud x \\
&=  \int_{|\Omega|t^{-1/\al}}^{\infty} \left(\sum_{n=1}^{N} (-1)^{n-1} b_n x^{-n\al-1} + O (x^{-(N+1)\al-1})\right)  \left(|\Omega|-t^{1/\al}x
\right) \ud x \\
&= \sum_{n=1}^{N} \frac{b_n}{n\al(1-n\al)} |\Omega|^{1-n\al} t^n +  \int_{|\Omega|t^{-1/\al}}^{\infty} O (x^{-(N+1)\al-1})  \left(|\Omega|-t^{1/\al}x\right) \ud x.
\end{align*}
We also have
\begin{align*}
\left|\int_{|\Omega|t^{-1/\al}}^{\infty} O (x^{-(N+1)\al-1})  \left(|\Omega|-t^{1/\al}x\right) \ud x\right| &\leq C \int_{|\Omega|t^{-1/\al}}^{\infty} \left(t^{1/\al}x-|\Omega|\right)  x^{-(N+1)\al-1} \, \ud x \\
& = C \frac{1}{(N+1)\al((N+1)\al-1)} |\Omega|^{1-(N+1)\al} \, t^{N+1}.
\end{align*}
The calculations for $\int_{-\infty}^{-\Omega|t^{-1/\al}} \left(|\Omega|+t^{1/\al}x\right) p_1(x) \, \ud x$ are analogous since $p_1(-x)$ corresponds to $p_1(x)$ with parameters $(\al, -\beta, \gamma)$.
\end{proof}

\bibliographystyle{abbrv}

\end{document}